\documentclass[12pt]{article}
\usepackage[letterpaper,margin=0.85in]{geometry}

\usepackage[british]{babel}
\usepackage[latin9]{inputenc}
\usepackage{amssymb}
\usepackage{amsthm}
\usepackage{xcolor}
\usepackage{blindtext}
\usepackage{amsmath, amsfonts}
\usepackage{empheq}
\usepackage{mdframed}
\DeclareMathOperator{\spn}{span}

\DeclareMathOperator{\tr}{tr}

\DeclareMathOperator{\Id}{Id}

\DeclareMathOperator{\grad}{grad}

\usepackage{enumerate}
\numberwithin{equation}{section}
\usepackage{tabulary}
\usepackage{bbm}
\usepackage{verbatim}
\usepackage[title]{appendix}

\usepackage[nameinlink,noabbrev]{cleveref}

\makeatletter
\def\@seccntformat#1{\@ifundefined{#1@cntformat}%
   {\csname the#1\endcsname\quad}
   {\csname #1@cntformat\endcsname}
}
\makeatother

\usepackage{lmodern}
\usepackage{paralist}
  \usepackage{paralist}
  \usepackage{graphics} 
  \usepackage{epsfig} 
\usepackage{graphicx}
\usepackage{epic}
\usepackage[Option]{overpic}
\usepackage{epstopdf}
\usepackage[font={small,it}]{caption}
\usepackage{subcaption}
\captionsetup[subfigure]{labelfont=rm}
\graphicspath{{figures/}} 
\usepackage{float}
\usepackage{slashed}
\usepackage{mathrsfs}
\usepackage{oldgerm}
\usepackage{dsfont}
\usepackage{setspace}
\usepackage{mathtools}
\usepackage[all]{xy}

\newcommand{\rmd}{\mathrm{d}}
\newcommand{\rmD}{\mathrm{D}}
\renewcommand{\epsilon}{\varepsilon}

\theoremstyle{plain}
\newtheorem{theorem}{Theorem}[section]

\newtheorem{lemma}[theorem]{Lemma}
\newtheorem{proposition}[theorem]{Proposition}



\newcommand{\nwc}{\newcommand}
\nwc{\red}[1]{\textcolor{red}{#1}}

\theoremstyle{definition}
\newtheorem{definition}[theorem]{Definition}

\theoremstyle{definition}
\newtheorem{definition and lemma}[theorem]{Definition and Lemma}

\theoremstyle{remark}
\newtheorem{remark}[theorem]{Remark}

\newcommand{\tostar}{\overset{*}{\lower0.5em\hbox{$\smash{\scriptscriptstyle\rightharpoonup}$}}}

\nwc{\blue}[1]{\textcolor{blue}{#1}}


\def\Id{{\textnormal{Id}}}

\def\txta{{\textnormal{a}}}

\def\txtr{{\textnormal{r}}}


\newcommand{\be}{\begin{equation}}
\newcommand{\ee}{\end{equation}}
\newcommand{\benn}{\begin{equation*}}
\newcommand{\eenn}{\end{equation*}}
\newcommand{\bea}{\begin{eqnarray}}
\newcommand{\eea}{\end{eqnarray}}
\newcommand{\beann}{\begin{eqnarray*}}
\newcommand{\eeann}{\end{eqnarray*}}

\numberwithin{equation}{section}
 
\date{\today} 

\begin{document}
\author{Maximilian Engel\thanks{Department of Mathematics, Freie Universit{\"a}t Berlin, Arnimallee 6, D-14195 Berlin. Corresponding author, email address: maximilian.engel@fu-berlin.de}~, Christian Kuehn\thanks{Faculty of Mathematics, Technical University of Munich,
Boltzmannstr. 3, D-85748 Garching bei M\"{u}nchen.}~, Matteo Petrera\thanks{Institute for Mathematics, Technical University of Berlin, Stra{\ss}e des 17. Juni 136,
D-10623 Berlin.}~ and Yuri Suris\footnotemark[3]}
 
\title{Discretized Fast-Slow Systems \\ 
with Canards in Two Dimensions}

\maketitle

\begin{abstract}
We study the problem of preservation of maximal canards for time discretized fast-slow systems with canard fold points. In order to ensure such preservation, certain favorable structure preserving properties of the discretization scheme are required. Conventional schemes do not possess such properties. We perform a detailed analysis for an unconventional discretization scheme due to Kahan. The analysis uses the blow-up method to deal with the loss of normal hyperbolicity at the canard point. We show that the structure preserving properties of the Kahan discretization for quadratic vector fields imply a similar result as in continuous time, guaranteeing the occurrence of maximal canards between attracting and repelling slow manifolds upon variation of a bifurcation parameter. The proof is based on a 
Melnikov computation along an invariant separating curve, which organizes the dynamics of the map similarly to the ODE problem.
\end{abstract}

{\bf Keywords:} slow manifolds, invariant manifolds, blow-up method,
loss of normal hyperbolicity,  discretization, maps, canards.

{\bf Mathematics Subject Classification (2010):} 34E15, 34E20, 37M99, 37G10, 34C45, 39A99.
\section{Introduction}

In this paper, we study the effect of the time discretization upon systems of ordinary differential equations (ODEs) which exhibit the phenomenon called ``canards''. It takes place, under certain conditions, in singularly perturbed (slow-fast) systems exhibiting fold points. The simplest form of such a system is
\begin{align} 
\label{ODE_intro}
\begin{array}{r@{\;\,=\;\,}l}
x' & f(x,y,\lambda,\epsilon), \\
y' & \epsilon g(x ,y,\lambda,\epsilon),
\end{array}
\end{align}
where we interpret $\epsilon >0$ as a small time scale parameter, separating between the fast variable $x$ and the slow variable $y$. 
For $\lambda=0$, the origin is assumed to be a non-hyperbolic fold point, possessing an attracting slow manifold and a repelling slow manifold. One says that the system admits a \emph{maximal canard} if there are trajectories connecting the attracting and the repelling slow manifolds \cite{BenoitCallotDienerDiener,DuRo96,ks2001/3}. This is a non-generic phenomenon which only becomes generic upon including an additional parameter $\lambda$, for the region of $\lambda$'s which is exponentially narrow as $\epsilon\to 0$. This makes the study of maximal canards especially challenging.

Krupa and Szmolyan~\cite{ks2011} have analyzed maximal canards for equation~\eqref{ODE_intro} by using the \emph{blow-up method} which allows to effectively handle the non-hyperbolic singularity at the origin. The key idea to use the blow-up method~\cite{Du78,Du93} for fast-slow systems goes back to Dumortier and Roussarie~\cite{DuRo96}. They observed that non-hyperbolic singularities can be converted into partially hyperbolic one by means of an insertion of a suitable manifold, e.g.~a sphere, at such a singularity.  The dynamics on this inserted manifold are partially hyperbolic, and truly hyperbolic in its neighborhood. The dynamics on the manifold are usually analyzed in different charts. See e.g.~\cite[Chapter~7]{ku2015} for an introduction into this technique. A non-exhaustive list of different applications to planar fast-slow systems includes \cite{DeMaesschalckDumortier7,
DeMaesschalckDumortier4,DeMaesschalckWechselberger,GucwaSzmolyan,ks2011,KuehnUM,KuehnHyp}. 

The main ingredient for a proof of maximal canards in \cite{ks2011} is the existence of a constant of motion for the dynamics in the \emph{rescaling chart} in the blown-up space. This constant of motion can be used for a \emph{Melnikov method} to compute the separation of the attracting and repelling manifold under perturbations, in particular to find relations between parameters $\epsilon$ and $\lambda$ under which the manifolds intersect, leading to a maximal canard. The role of this constant of motion suggests that, in order to retain the existence of maximal canards, the right choice of the time discretization scheme becomes of a crucial importance. Indeed, one can show that conventional discretization schemes like the Euler method \emph{do not} preserve maximal canards. The concept of a structure preserving discretization method is necessary. We investigate time discretization of the ODE~\eqref{ODE_intro} via the Kahan method which has been shown to preserve various integrability attributes in many examples (and known also as Hirota-Kimura method in the context of integrable systems, see e.g.~\cite{Kahan93, PetreraSuris18}).
We apply the blow-up method, which so far has been mainly used for flows, to the discrete time fast-slow dynamical systems induced by the Kahan discretization procedure. We show that these dynamical systems exhibit maximal canards for $\lambda$ and $\epsilon$ related by a certain a functional relation existing in a region which exponentially narrow with $\epsilon\to 0$. Thus, we extend to the discrete time context the previously known feature of the continuous time systems, provided an intelligent choice of the discretization scheme. We would like to stress that, despite the similarity of results to the continuous time case, the techniques of the proofs for the discrete time had to be substantially modified. In particular, the arguments based on the conserved quantity cannot be directly transferred into the discrete time context, since the conserved quantities there are only formal (divergent asymptotic series). Thus, it turned out to be necessary to use more general arguments based on the existence of an invariant measure and an invariant separating curve characterized as a singular curve of an invariant measure. We use also a more general version of the Melnikov method, similar to the one presented in \cite{Wechselberger2002}. 

Note that the application of the blow-up method to the discrete-time problem of folded canards is a considerable extension compared to the Euler discretizations for transcritical singularities, as studied in \cite{EngelKuehn18}. The folded canard case has specific dynamic structure, as explained above, such that a structure-preserving discretization method is needed, now performing the blow-up for the rational Kahan mapping. Compared to \cite{EngelKuehn18}, the kind of map is different, the structure in the singular limit is richer, there is an additional parameter $\lambda$, also rescaled in the blow-up, and the type of result, namely a continuation of the critical object along a two-parameter curve, is new.

Based on observations of this paper, the employment of Kahan's method for a treatment of canards can also be found in \cite{EngelJardon}. 
There, the simplest canonical form for folded, pitchfork and transcritical canards is studied and the focus lies on the linearization along trajectories. While it is demonstrated that explicit Runge-Kutta methods cannot provide symmetry of entry-exit relations, the linearization along the Kahan scheme and similar symmetric, A-stable methods are shown to preserve the typical continuous-time behaviour. Hence, the discussion of symmetry and linear stability in \cite{EngelJardon} supplements the paper at hand; here, we establish the existence and extension of maximal canards along parameter combinations for the nonlinear problem of folded canards with additional quadratic perturbation terms, in particular using the blow-up technique.

The paper is organized as follows. Section~\ref{sec:conttime} recalls the setting of fast-slow systems in continuous time and summarizes the main result on maximal canards, Theorem~\ref{canard_classic}, with a short sketch of the proof, as given in \cite{ks2011}. In Section~\ref{sec:disctime}, we study the problem of a maximal canard for systems with folds in discrete time. We establish the Kahan discretization of the canard problem in Section~\ref{secKahanmethcan} and discuss the reduced subsystem of the slow time scale in Section~\ref{sec:slowflow}. In Section~\ref{sec:blowup}, we introduce the blow-up transformation for the discretized problem. We discuss the dynamics for the entering and exiting chart in Section~\ref{sec:KahanK1}, and for the rescaling chart in Section~\ref{sec:KahanK2}. In Section~\ref{sec:dyn_prop_K2}, we explore the dynamical properties of the Kahan map in the rescaling chart, including a formal conserved quantity, an invariant measure and an invariant separating curve. Following this, we conduct the Melnikov computation along the invariant curve in Section~\ref{sec:Melnikovnew}, leading to the proof of the main Theorem~\ref{canard_discrete}, which is the discrete-time analogue to Theorem~\ref{canard_classic}. Finally, we provide various numerical illustrations in Section~\ref{sec:numerics} and conclude with an outlook in Section~\ref{sec:conclusion}.

Thus, we succeeded in adding the problem of maximal canards to the recent list of results, where a geometric analysis shows that certain features of fast-slow systems with non-hyperbolic singularities can be preserved via a suitable discretization, including the cases of the fold singularity~\cite{ns2013}, the transcritical singularity \cite{EngelKuehn18} and the pitchfork singularity~\cite{ArciEngelKuehn19}. More broadly viewed, our results also provide a continuation of a line of research on discrete-time fast-slow dynamical systems, which includes the study of canard/delay behavior in iterated maps via normal form transformations~\cite{Neishtadt3}, non-standard analysis~\cite{Fruchard,Fruchard2}, renormalization~\cite{Baesens1}, Gevrey series~\cite{Baesens2}, complex-analytic methods~\cite{FruchardSchaefke1}, and phase plane partitioning~\cite{MiraShilnikov}.

\medskip

\textbf{Acknowledgments:} The authors gratefully acknowledge support by DFG (the Deutsche Forschungsgemeinschaft) via the SFB/TR 109
``Discretization in Geometry and Dynamics''. ME acknowledges support by Germany's Excellence Strategy -- The Berlin Mathematics Research Center MATH+ (EXC-2046/1, project ID: 390685689), and CK acknowledges support by a Lichtenberg Professorship of the VolkswagenFoundation.
\section{Maximal canard through a fold in continuous time} \label{sec:conttime}

\subsection{Fast-slow systems}

We start with a brief review and notation for continuous-time fast-slow systems. 
Consider a system of singularly perturbed ordinary differential equations (ODEs) 
of the form
\begin{align} \label{slowequ}
\renewcommand{\arraystretch}{2.0}
\begin{array}{rcrcl}
\epsilon \dfrac{\rmd x}{\rmd \tau} &=& \epsilon \dot{x} &=& f(x,y,\epsilon), \\
\dfrac{\rmd y}{\rmd \tau}&=&\dot{y} &=& g(x,y,\epsilon), \quad \ x \in \mathbb{R}^m, 
\quad y \in \mathbb R^n, \quad 0 < \epsilon \ll 1\,,
\end{array}
\end{align}
where $f,g,$ are $C^k$-functions with $k \geq 3$. Since $\epsilon$ is a small parameter, 
the variables $x$ and $y$ are often called the \textit{fast} and 
the \textit{slow} variables, respectively. The time variable $\tau$
in~\eqref{slowequ} is termed the \textit{slow} time scale. The change of 
variables to the \textit{fast} time scale $t:= \tau / \epsilon$ transforms the system~\eqref{slowequ} 
into ODEs
\begin{align} \label{fastequ}
\begin{array}{r@{\;\,=\;\,}r}
x' & f(x,y,\epsilon), \\
y' & \epsilon g(x,y,\epsilon).
\end{array}
\end{align}
To both systems \eqref{slowequ} and \eqref{fastequ} there correspond respective limiting problems for $\epsilon = 0$: 
the \textit{reduced problem} (or \textit{slow subsystem}) is given by
\begin{align} \label{redequ}
\begin{array}{r@{\;\,=\;\,}l}
0 & f(x,y,0), \\
\dot{y} & g(x,y,0), 
\end{array}
\end{align}
and the \textit{layer problem} (or \textit{fast subsystem}) is 
\begin{align} \label{layerequ}
\begin{array}{r@{\;\,=\;\,}l}
x' & f(x,y,0), \\
y' & 0.
\end{array}
\end{align}
The reduced problem~\eqref{redequ} can be understood as a dynamical system on 
the \textit{critical manifold} 
$$S_0= \{(x,y) \in \mathbb{R}^{m+n} \,:\, f(x,y,0) = 0 \}\,.$$
Observe that the manifold $S_0$ consists of equilibria of the layer 
problem~\eqref{layerequ}. $S_0$ is called \textit{normally hyperbolic} if for all $p\in S_0$
the matrix $\textnormal{D}_xf(p)\in\mathbb{R}^{m\times m}$  has no eigenvalues
on the imaginary axis. For a normally hyperbolic $S_0$, \textit{Fenichel 
theory} \cite{Fenichel4,Jones,ku2015,WigginsIM} implies that, for 
sufficiently small $\epsilon$, there is a locally invariant slow manifold $S_{\epsilon}$ 
such that the restriction of~\eqref{slowequ} to $S_{\epsilon}$ is a regular
perturbation of the reduced problem~\eqref{redequ}. Furthermore, it follows from 
Fenichel's perturbation results that $S_{\epsilon}$ possesses an invariant 
stable and unstable foliation, where the dynamics behave as a small perturbation 
of the layer problem~\eqref{layerequ}.

\subsection{Main result on maximal canards in slow-fast systems with a fold} \label{sec:contmain}

A challenging phenomenon is the breakdown of normal hyperbolicity of $S_0$ such that 
Fenichel theory cannot be applied. Typical examples of such a breakdown are found 
at bifurcation points $p\in S_0$, where the Jacobi matrix $\rmD_x f(p)$ has at least one 
eigenvalue with zero real part. The simplest examples are \emph{folds} in planar systems ($m=n=1$),
i.e., points $p=(x_0,y_0)\in\mathbb R^2$ (without loss of generality $p=(x_0,y_0)=(0,0)$) where  $\partial f/\partial x$ vanishes and in whose neighbourhood $S_0$ looks like a parabola.
The left part of $S_0$ (with $x<0$) is denoted by $S_a$ ($a$ for ``attractive''), while its right part (with $x>0$) is denoted by $S_r$ ($r$ for ``repelling''). These notations refer to the properties of dynamics of the layer problem in the region $y>0$ (see e.g.~\cite[Figure 8.1]{ku2015}). By standard Fenichel theory, for sufficiently small $\epsilon > 0$, outside of an arbitrarily small neighborhood of $p$, the manifolds $S_a$ and $S_r$ perturb smoothly to invariant manifolds $S_{a, \epsilon}$ and $S_{r, \epsilon}$.

In the following we focus on 
the particularly challenging problem of fold points admitting  \emph{maximal canards}. 
In this case, the critical curve $S_0=\{f(x,y,0)=0\}$ can be locally parametrized as $y=\varphi(x)$ such that the reduced dynamics on $S_0$ are given by
\be \label{reduceddyn}
\dot{x} = \frac{g(x, \varphi(x),0)}{\varphi'(x)}.
\ee
In our setting, the function at the right-hand side is smooth at the origin, so that the reduced flow goes through the origin via a maximal solution $x_0(t)$ of~\eqref{reduceddyn} with $x_0(0) =0$. 
The solution $(x_0(t),y_0(t))$ with $y_0(t)=\varphi(x_0(t))$ connects both parts $S_a$ and $S_r$ of $S_0$. However, there is no reason to expect that for $\epsilon>0$, the (extension of the) solution parametrizing $S_{a, \epsilon}$ will coincide with the (extension of the) solution parametrizing $S_{r, \epsilon}$, unless there are some special reasons, like symmetry, forcing such a coincidence.

\begin{definition}
We say that a planar slow-fast system admits a \emph{maximal canard}, if the extension of the attracting slow manifold $S_{a,\epsilon}$ coincides with the extension of a repelling slow manifold $S_{r,\epsilon}$. 
\end{definition}

{\bf Example.} \label{rem:Hepsilon_lambda0}
Consider the system
\begin{align} \label{K2_epsilon}
\begin{array}{r@{\;\,=\;\,}l}
\epsilon \dot x & - y +  x^2 , \\
\dot y & x,
\end{array}
\end{align}
corresponding to $f(x,y,\epsilon)=x^2-y$ and $g(x,y,\epsilon)=x$. For the reduced system ($\epsilon=0$) we obtain $y=\varphi(x)=x^2$ and $2x\dot{x} = x$, hence $\dot x = 1/2$ (regular at $x=0$). The solution $x_0(t)$ is given by $x_0(t)=\tau/2$ so that 
$$
(x_0(\tau),y_0(\tau))=\Big(\frac{\tau}{2},\frac{\tau^2}{4}\Big).
$$
Observe that the system is symmetric with respect to the reversion of time $\tau\mapsto -\tau$ simultaneously with $x\mapsto -x$. This ensures the existence of the maximal canard also for any $\epsilon>0$. In this particular example, one can easily find the maximal canard explicitly. Indeed, one can easily check that, for any $\epsilon>0$,
$$
(x_{0,\epsilon}(\tau),y_{0,\epsilon}(\tau))=\Big(\frac{\tau}{2},\frac{\tau^2}{4}-\frac{\epsilon}{2}\Big)
$$
is a solution of \eqref{K2_epsilon} which parametrizes the invariant set
\begin{equation} \label{invariant_epsi}
S_{\epsilon} = \left\{ (x,y) \in \mathbb{R}^2 \, : \, y = x^2 - \frac{\epsilon}{2} \right\},
\end{equation}
which consists precisely of the attracting branch $S_{a, \epsilon} = \left\{ (x,y) \in S_{\epsilon} \, : \, x < 0 \right\}$ and the repelling branch $S_{r, \epsilon} = \left\{ (x,y) \in S_{\epsilon} \, : \, x > 0 \right\}$, such that trajectories on $S_{\epsilon}$ go through $x=0$ with the speed $\dot x = \epsilon/2$. However, any generic perturbation of this example, e.g. with $g(x,y,\epsilon)=x+x^2$, will destroy its peculiarity and will not display a maximal canard.
\medskip

Thus, maximal canards are not a generic phenomenon in the above setting. 
 In order to find a context where they become generic, we have to consider families depending on an additional parameter $\lambda$:
\begin{align} \label{fastequlambda}
\begin{array}{r@{\;\,=\;\,}l}
x' & f(x,y,\lambda, \epsilon), \\
y' & \epsilon g(x,y,\lambda, \epsilon).
\end{array}
\end{align}
We assume that at $\lambda = \epsilon = 0$, the vector fields $f$ and $g$ satisfy the above conditions.
By a local change of coordinates, the problem can be brought into the canonical form
\begin{align} \label{normalform}
\begin{array}{r@{\;\,=\;\,}l}
x' & - y k_1(x,y, \lambda, \epsilon) +  x^2 k_2(x,y, \lambda, \epsilon)  + \epsilon k_3(x,y, \lambda, \epsilon), \\
y' & \epsilon(x k_4(x,y, \lambda, \epsilon) - \lambda k_5(x,y,\lambda, \epsilon) + y k_6(x,y, \lambda, \epsilon)),
\end{array}
\end{align}
where  
\begin{align} \label{higherorder}
\begin{array}{r@{\;\,=\;\,}l}
k_i(x,y, \lambda, \epsilon) & 1 + \mathcal{O}(x,y, \lambda, \epsilon)\,, \quad i=1,2,4,5,\\
k_i(x,y, \lambda, \epsilon) & \mathcal{O}(x,y, \lambda, \epsilon)\,, \quad i=3,6.
\end{array}
\end{align}
The main result on existence of maximal canards, as given in \cite[Theorem 3.1]{ks2011}, can be summarized as follows. 
Set
\begin{align} \label{expansionconstants}
\renewcommand{\arraystretch}{2.0}
\begin{array}{r@{\;\,=\;\,}l}
a_1 & \frac{\partial k_3}{\partial x}(0,0,0,0), \ a_2 = \frac{\partial k_1}{\partial x}(0,0,0,0), \ a_3 = \frac{\partial k_2}{\partial x}(0,0,0,0), \\
a_4 &\frac{\partial k_4}{\partial x}(0,0,0,0), \ a_5 =  k_6(0,0,0,0),
\end{array}
\end{align}
and 
\begin{equation} \label{constant}
C = \frac{1}{8} ( 4 a_1 - a_2 + 3 a_3 - 2 a_4 + 2 a_5).
\end{equation}

\begin{theorem} \label{canard_classic} 
Consider system~\eqref{normalform} such that the solution $(x_0(t),y_0(t))$ of the reduced problem for $\epsilon=0$, $\lambda=0$ connects $S_a$ and $S_r$. Assume that $C\neq 0$. Then there exist $\epsilon_0 > 0$ and a smooth function 
$$ 
\lambda_c(\sqrt{\epsilon})= - C \epsilon + \mathcal{O}(\epsilon^{3/2}),
$$ 
defined on $[0, \epsilon_0]$ such that for $\epsilon \in [0, \epsilon_0]$ 
there is a maximal canard, that is,  the extended  attracting slow manifold $S_{a, \epsilon}$ coincides with the extended repelling slow manifold $S_{r,\epsilon}$, if and only if $\lambda = \lambda_c(\sqrt{\epsilon})$. 
\end{theorem}
The main result of this paper will be a discretized version of Theorem~\ref{canard_classic} restricted to quadratic vector fields, proving for Kahan maps the extension of canards along a parameter curve, as opposed to \cite{EngelJardon} where only Example~\eqref{K2_epsilon} and its linearization are studied.

The proof of Theorem~\ref{canard_classic} is based on the \emph{blow-up technique}, transforming the singular problem to a manifold where the dynamics can be desingularized and studied in two different charts. 
The crucial step in the second chart $K_2$ is the continuation of center manifold connections via a Melnikov method based on an integral of motion $H$. 
In the Appendix~\ref{Appendix}, we summarize this procedure from \cite{ks2011}, adding several observations on the dynamics; its separatrix, its invariant measure and an alternative non-Hamiltonian expression that relates to the discrete-time proof we will provide in the following.


\section{Maximal canard for a system with a fold in discrete time} \label{sec:disctime}

\subsection{Kahan discretization of canard problem} \label{secKahanmethcan}
We discretize system~\eqref{normalform} with the \emph{Kahan method}. It was introduced in \cite{Kahan93} as an unconventional discretization scheme applicable to arbitrary ODEs with quadratic vector fields. 
It was demonstrated in~\cite{PetreraPfadlerSuris09, PetreraPfadlerSuris11, PetreraSuris18} and in~\cite{Celledonietal2013} that this scheme tends to preserve integrals of motion and invariant volume forms. 
There are few general results available to support this claim, in particular, two
general cases of preservation of invariant volume forms in~\cite[Section 2]{PetreraPfadlerSuris11}
and a similar result for Hamiltonian systems with a cubic Hamilton
function in~\cite{Celledonietal2013}. 
However, the number of particular results not covered
by any general theory and reviewed in the above references, is quite
impressive. Our study here will contribute an additional evidence, as
the result of Section~\ref{sec:invariant_measure} also belongs to this category, i.e., is not
covered by known general statements.

Consider an ODE with a quadratic vector field:
\begin{equation} \label{genODE}
z' = f(z) = Q(z) + B z + c,
\end{equation}
where each component of $Q: \mathbb R^n \to \mathbb R^n$ is a quadratic form, $B \in \mathbb R^{n \times n}$ and $c \in \mathbb R^n$. The Kahan discretization of this system reads as
\begin{equation} \label{genKahan}
\frac{\tilde z - z}{h} = \bar Q(z, \tilde z) + \frac{1}{2} B( z + \tilde z) + c,
\end{equation}
where
$$ \bar Q(z, \tilde z)  = \frac{1}{2} ( Q(z +\tilde z) - Q(z) - Q(\tilde z))$$ 
is the symmetric bilinear form such that $ \bar Q(z,z) = Q(z)$. Note that equation~\eqref{genKahan} is linear with respect to $\tilde z$ and therefore defines a \emph{rational} map $\tilde z = F_f (z, h)$, which approximates the time $h$ shift along the solutions of the ODE~\eqref{genODE}. Further note that $F_f^{-1}(z,h) = F_f (z, - h)$ and, hence, the map is \emph{birational}. An explicit form of the map $F_f$ defined by equation~\eqref{genKahan} is given by
\begin{equation} \label{Kahanexplicit}
\tilde z = F_f (z, h) = z + h\Big(\Id - \frac{h}{2} \rmD f(z)\Big)^{-1} f(z).
\end{equation}

In order to be able to apply the Kahan discretization scheme, we restrict ourselves to systems \eqref{slowequ}, \eqref{fastequ} which are quadratic, that is, to
\begin{align}  \label{ODE_for_Kahan_slow}
\renewcommand{\arraystretch}{1.3}
\begin{array}{r@{\;\,=\;\,}l}
\epsilon \dot{x} & - y +  x^2  + \epsilon a_1 x - a_2 x y, \\
\dot{y} & x  - \lambda + a_5 y  + a_4 x^2,
\end{array}
\end{align}
resp.
\begin{align}  \label{ODE_for_Kahan}
\renewcommand{\arraystretch}{1.3}
\begin{array}{r@{\;\,=\;\,}l}
x' & - y +  x^2  + \epsilon a_1 x - a_2 x y, \\
y' & \epsilon(x  - \lambda) + \epsilon a_5 y  + \epsilon a_4 x^2,
\end{array}
\end{align}
which corresponds to normal forms \eqref{normalform} with $k_1=1+a_2x$, $k_2=1$, $k_3=a_1x$, $k_4=1+a_4x$, $k_5=1$, and $k_6=a_5$. 

\begin{remark}
It was demonstrated in \cite[Proposition 1]{Celledonietal2013} that Kahan map \eqref{Kahanexplicit} coincides with the map produced by the following implicit Runge-Kutta scheme, when the latter is applied to a quadratic vector field $f$:
\begin{equation} \label{RungeKutta}
\frac{\tilde z - z}{h} =  - \frac{1}{2} f(z) + 2f\left( \frac{z+\tilde z}{2} \right) - \frac{1}{2} f(\tilde z).
\end{equation}
This opens the way of extending our present results for more general (not necessarily quadratic) systems \eqref{normalform}. In the present paper we restrict ourselves to the case \eqref{ODE_for_Kahan}, since the algebraic structure keeps the calculations clear and explicit and demonstrates the central methodological aspects of our proofs. However, we additionally apply the scheme~\eqref{RungeKutta} to the folded canard problem with cubic nonlinearity in Section~\ref{sec:numerics}, illustrating its numerical capacity beyond the quadratic case. A proof of maximal canards for the non-quadratic case remains an open problem for future work.
\end{remark}

\subsection{Reduced subsystem of the slow flow} \label{sec:slowflow}

Kahan discretization of \eqref{ODE_for_Kahan_slow} reads:
\begin{equation}\label{Kahan for slow}
\renewcommand{\arraystretch}{1.9}
\begin{array}{r@{\;\,=\;\,}l}
\dfrac{\epsilon}{h}(\tilde x - x) & - \dfrac{1}{2}(y +\tilde y)+x\tilde  x +  \dfrac{\epsilon a_1}{2}(x+\tilde x) - \dfrac{a_2}{2}(\tilde x y+x\tilde y), \\ 
\dfrac{1}{h}(\tilde y - y) & \dfrac{1}{2}(x +\tilde x) -\lambda+ \dfrac{a_5}{2} (y+\tilde y) + a_4 x\tilde x.
\end{array}
\end{equation} 
\begin{proposition}
The reduced system \eqref{Kahan for slow} with $\epsilon=0$ defines an evolution on a curve $$S_{0,h}=\big\{(x,y)\in \mathbb R^2: y=\varphi_{0,h}(x)\big\}$$ which supports a one-parameter family of solutions $x_h(n;x_0)$ with $x_h(0;x_0)=x_0$. For small $\epsilon>0$, this curve is perturbed to normally hyperbolic invariant curves $S_{a,h,\epsilon}$ resp. $S_{r,h,\epsilon}$ of the slow flow \eqref{Kahan for slow part} for $x<0$, resp. for $x>0$. 
\end{proposition}

For the simplest case $a_1=a_2=a_4=a_5=0$ and $\lambda=0$, 
\begin{equation}\label{Kahan for slow part}
\renewcommand{\arraystretch}{1.9}
\begin{array}{r@{\;\,=\;\,}l}
\dfrac{\epsilon}{h}(\tilde x - x) & - \dfrac{1}{2}(y +\tilde y)+x\tilde  x, \\ 
\dfrac{1}{h}(\tilde y - y) & \dfrac{1}{2}(x +\tilde x).
\end{array}
\end{equation} 
everything can be done explicitly. Straightforward computations lead to the following results. 

The reduced system
\begin{equation}\label{Kahan reduced}
\renewcommand{\arraystretch}{1.9}
\begin{array}{r@{\;\,=\;\,}l}
0 & - \dfrac{1}{2}(\tilde y + y)+\tilde x x, \\ 
\dfrac{1}{h}(\tilde y - y) & \dfrac{1}{2}(\tilde x + x)
\end{array}
\end{equation} 
has an invariant critical curve
\begin{equation} \label{Critical curve}
S_{0,h}=\Big\{ (x,y)\in\mathbb R^2: y=x^2-\frac{h^2}{8}\Big\}.
\end{equation}
The evolution on this curve is given by $\tilde x=x+\frac{h}{2}$, so that $x_h(n;x_0)=x_0+\frac{nh}{2}$. 

For the full system \eqref{Kahan for slow part}, the symmetry $x\mapsto -x$, $h\to -h$ ensures the existence of an invariant curve 
\begin{equation} \label{Kahan invariant curve}
S_{\epsilon,h}=\Big\{ (x,y)\in\mathbb R^2: y=x^2-\frac{\epsilon}{2}-\frac{h^2}{8}\Big\},
\end{equation}
whose parts with $x<0$, resp $x>0$ are the invariant curves $S_{a,h,\epsilon}$ resp. $S_{r,h,\epsilon}$. This curve supports solutions
with $x(n)=x_0+\frac{nh}{2}$. Thus, system \eqref{Kahan for slow part} exhibits a maximal canard. Our goal is to establish the existence of a maximal canard for system \eqref{Kahan for slow}.

\subsection{Blow-up of the fast flow} \label{sec:blowup}

Kahan discretization of the fast flow \eqref{ODE_for_Kahan} is the system \eqref{Kahan for slow} with $h\mapsto h\epsilon$:
\begin{equation}\label{Kahan for normalform}
\renewcommand{\arraystretch}{1.9}
\begin{array}{r@{\;\,=\;\,}l}
\dfrac{1}{h}(\tilde x - x) & - \dfrac{1}{2}(\tilde y + y)+\tilde x x +  \dfrac{\epsilon a_1}{2}(\tilde x+x) - \dfrac{a_2}{2}(\tilde x y+x\tilde y), \\ 
\dfrac{1}{h}(\tilde y - y) & \dfrac{\epsilon}{2}(\tilde x + x) -\epsilon \lambda+ \dfrac{\epsilon a_5}{2} (\tilde y +y) + \epsilon a_4 \tilde xx.
\end{array}
\end{equation}

We introduce a quasi-homogeneous blow-up transformation for the discrete time system, interpreting the step size $h$ as a variable in the full system. Similarly to the continuous time situation, the transformation reads
$$ x =  r \bar x, \quad y =  r^2 \bar y, \quad \epsilon = 
 r^2 \bar \epsilon, \quad \lambda = r \bar \lambda, \quad h = \bar h/r\,, $$
where $(\bar x, \bar y, \bar \epsilon, \bar \lambda,  r, \bar h) \in B := S^2 \times [-\kappa, \kappa]  \times [0, \rho] \times 
[0, h_0] $ for some $h_0, \rho, \kappa > 0$. The change of variables in 
$h$ is chosen such that the map is desingularized in the relevant charts. 

This transformation is a map $\Phi: B \to \mathbb R^5$. 
If $F$ denotes the map obtained from the time-discretization, the map $\Phi$ induces a map $\overline{F}$ on $B$ by $\Phi \circ \overline{F} 
\circ \Phi^{-1} = F$. Analogously to the continuous time case, we are using the 
charts $K_i$, $i=1,2$, to describe the dynamics. The chart $K_1$ (setting $\bar y =1$) focuses on 
the entry and exit of trajectories, and is given by
\begin{equation} \label{K1dis}
x = r_1 x_1, \quad y = r_1^2, \quad \epsilon = r_1^2 \epsilon_1, \quad \lambda = r_1 \lambda_1, \quad h = h_1/r_1\,.
\end{equation}
In the scaling chart $K_2$ (setting $\bar \epsilon =1$) the dynamics arbitrarily close to the origin are analyzed. 
It is given via the mapping
\begin{equation} \label{K2dis}
x = r_2 x_2, \quad y = r_2^2 y_2, \quad \epsilon = r_2^2 , \quad \lambda = r_2 \lambda_2, \quad h = h_2/r_2\,.
\end{equation}
The change of coordinates from $K_1$ to $K_2$ is denoted by $\kappa_{12}$ and, for $\epsilon_1 > 0$, is given by
\begin{equation} \label{kappa12}
x_2 = \epsilon_1^{-1/2} x_1, \quad y_2 = \epsilon_1^{-1}, \quad r _2= r_1 \epsilon_1^{1/2}, \quad \lambda_2 = \epsilon_1^{-1/2} \lambda_1, \quad h_2 = h_1 \epsilon_1^{1/2}\,.
\end{equation}
Similarly, for $y > 0$, the map $\kappa_{21} = \kappa_{12}^{-1}$ is given by
\begin{equation} \label{kappa21}
x_1 = y_2^{-1/2} x_2, \quad r_1 = y_2^{1/2} r_2, \quad \epsilon_1 = y_2^{-1}, \quad \lambda_1 = y_2^{-1/2} \lambda_2, \quad h_1 = h_2 y_2^{1/2}\,.
\end{equation}

\subsection{Dynamics in the entering and exiting chart $K_1$} 
\label{sec:KahanK1}

Here we extend the dynamical equations \eqref{Kahan for normalform} by
\begin{equation}\label{trivial evolution ep la h}
\tilde \epsilon=\epsilon, \quad \tilde \lambda=\lambda, \quad \tilde h=h, 
\end{equation}
and then introduce the coordinate chart $K_1$ by \eqref{K1dis}:
\begin{equation} 
x = r_1 x_1, \quad y = r_1^2, \quad \epsilon = r_1^2 \epsilon_1, \quad \lambda = r_1 \lambda_1, \quad h = h_1/r_1,
\end{equation}
defined on the domain
\begin{equation} \label{D1}
D_1 = \left\{ (x_1, r_1, \epsilon_1, \lambda_1, h_1) \in \mathbb{R}^5 : 0 \leq r_1 \leq \rho, \;\; 0 \leq \epsilon_1 \leq \delta, \;\; 0 \leq h_1 \leq  \nu\right\}.
\end{equation}
where $\rho, \delta,\nu > 0$ are sufficiently small.

To transform the map~\eqref{Kahan for normalform} into the coordinates of $K_1$, we start with the particular case $a_1=a_2=a_4=a_5=0$, generated by difference equations
\begin{equation}\label{Kahan pure}
\frac{1}{h} (\tilde x - x)= \tilde x x - \frac{1}{2}(\tilde y + y), \quad \frac{1}{h}(\tilde y - y) = \frac{\epsilon}{2}(\tilde x + x) - \epsilon\lambda, 
\end{equation}
supplied, as usual, by \eqref{trivial evolution ep la h}. Written explicitly, this is the map
\begin{equation} \label{map_Kahan}
\tilde{x}= \frac{P(x,y,\epsilon,\lambda,h)}{R(x,\epsilon,h)}, \quad
\tilde{y}= \frac{Q(x,y,\epsilon,\lambda,h)}{R(x,\epsilon,h)}, \quad
\tilde \epsilon=\epsilon, \quad \tilde \lambda=\lambda, \quad \tilde h=h, 
\end{equation}
where
\begin{eqnarray}
P(x,y,\epsilon,\lambda,h) & = & x - hy - \tfrac{h^2}{4} \epsilon x + \tfrac{h^2}{2}\lambda \epsilon, \\
Q(x,y,\epsilon,\lambda,h) & = & y - hyx - \tfrac{h^2}{2} \epsilon x^2 - h\lambda \epsilon + h^2 x \lambda \epsilon + h \epsilon x - \tfrac{h^2}{4} \epsilon y,\\
R(x,\epsilon,h) & = & 1- hx + \tfrac{h^2}{4} \epsilon.
\end{eqnarray}
Upon substitution $K_1$, we have:
\begin{eqnarray}
P(x,y,\epsilon,\lambda,h) & = & r_1P_1(x_1,\epsilon_1,\lambda_1,h_1), \\
Q(x,y,\epsilon,\lambda,h) & = & r_1^2Q_1(x_1,\epsilon_1,\lambda_1,h_1), \\
R(x,\epsilon,h) & = & R_1(x_1,\epsilon_1,\lambda_1,h_1), 
\end{eqnarray}
where
\begin{eqnarray}
P_1(x_1,\epsilon_1,\lambda_1,h_1) & = & x_1 - h_1 - \tfrac{h_1^2}{4} \epsilon_1 x_1 + \tfrac{h_1^2}{2}\lambda_1 \epsilon_1, \\
Q_1(x_1,\epsilon_1,\lambda_1,h_1) & = & 1 - h_1x_1 - \tfrac{h_1^2}{2} \epsilon_1 x_1^2 - h_1\lambda_1 \epsilon_1 + h_1^2 x_1 \lambda_1 \epsilon_1 + h_1 \epsilon_1 x_1 - \tfrac{h_1^2}{4} \epsilon_1,\\
R_1(x_1,\epsilon_1,h_1) & = & 1- h_1x_1 + \tfrac{h_1^2}{4} \epsilon_1.
\end{eqnarray}
Setting 
\begin{equation}
Y_1(x_1,\epsilon_1,\lambda_1,h_1)=\frac{Q_1(x_1,\epsilon_1,\lambda_1,h_1)}{R_1(x_1,\epsilon_1,h_1)},
\end{equation}
\begin{equation}
X_1(x_1,\epsilon_1,\lambda_1,h_1)=\frac{P_1(x_1,\epsilon_1,\lambda_1,h_1)}{Q_1(x_1,\epsilon_1,\lambda_1,h_1)^{1/2}R_1(x_1,\epsilon_1,h_1)^{1/2}},
\end{equation}
we come to the following expression for the map \eqref{map_Kahan} in the chart $K_1$:
\begin{align} \label{K1dynamics_Kahan}
\renewcommand{\arraystretch}{1.6}
\begin{array}{r@{\;\,=\;\,}l}
\tilde x_1 & X_1(x_1,\epsilon_1,\lambda_1,h_1), \\
\tilde{r}_1 & r_1 (Y_1(x_1, \epsilon_1, \lambda_1, h_1))^{1/2},  \\
\tilde \epsilon_1 &  \epsilon_1 (Y_1(x_1, \epsilon_1, \lambda_1, h_1))^{-1}, \\
\tilde \lambda_1 &  \lambda_1 (Y_1(x_1, \epsilon_1, \lambda_1, h_1))^{-1/2}, \\
\tilde{h}_1 & h_1 (Y_1(x_1, \epsilon_1, \lambda_1, h_1))^{1/2}.
\end{array}
\end{align}
Now it is straightforward to extend these results to the general case of the map \eqref{Kahan for normalform} with arbitrary constants $a_i$. For this, we observe:
\begin{itemize}
\item[--] in the first equation, the terms $y$ and $x^2$ on the right-hand side scale as $r_1^2$ and $r_1^2x_1^2$, while the terms $\epsilon x$ and $xy$ scale as $r_1^3\epsilon_1x_1$ and $r_1^3x_1$, respectively;
\item[--] in the second equation, the terms $\epsilon x$ and $\epsilon\lambda$ on the right-hand side scale as $r_1^3\epsilon_1x_1$ and $r_1^3\epsilon_1\lambda_1$, while the terms $\epsilon y$ and $\epsilon x^2$ scale as $r_1^4\epsilon_1$ and $r_1^4\epsilon_1x_1^2$, respectively.
\end{itemize}
Therefore, we can treat all terms involving $a_1, a_2, a_4, a_5$ as $ \mathcal{O}( r_1)$.  The resulting map is given by formulas analogous to \eqref{K1dynamics_Kahan}, with $X_1(x_1,\epsilon_1,\lambda_1,h_1)$, $Y_1(x_1,\epsilon_1,\lambda_1,h_1)$ replaced by certain functions 
$$
X_1(x_1,\epsilon_1,\lambda_1,h_1)+\mathcal O(r_1)\quad {\rm  and} \quad Y_1(x_1,\epsilon_1,\lambda_1,h_1)+\mathcal O(\epsilon_1r_1).
$$

We now analyze the dynamics of this map.
\begin{itemize}
\item The subset $\{r_1 = 0, \;\epsilon_1 = 0, \;\lambda_1 = 0\} \cap D_1$ is invariant, and on this subset we have $Y_1(x_1,r_1,\epsilon_1, \lambda_1, h_1) =1$, so that
$$ \tilde x_1 = \frac{x_1 - h_1}{1 - h_1 x_1}, \quad \tilde h_1 = h_1.$$
Hence, it contains two curves of fixed points 
$$p_{a,1}(h_1) = (-1,0,0,0, h_1) \quad \text{and} \ p_{r,1}(h_1) = (1,0,0,0, h_1).$$
We have:
\begin{equation*}
 \left | \frac{\partial \tilde x_1}{\partial x_1} (p_{a,1}(h_1)) \right |= \left |  \frac{1 - h_1}{1 + h_1} \right | <1, \quad
\left |  \frac{\partial \tilde x_1}{\partial x_1} (p_{r,1}(h_1)) \right | = \left |  \frac{1 + h_1}{ 1 - h_1 } \right | >1
\end{equation*}
for $h_1 \leq \nu < 1$, hence the point $p_{a,1}(h_1)$ is attracting in the $x_1$-direction and the point $p_{r,1}(h_1)$ is repelling in the $x_1$-direction. In all other directions, the multipliers of these fixed points are equal to 1.

\item Similarly, we have on $\{\epsilon_1 = 0, \lambda_1 = 0\} \cap D_1$ for small $r_1 > 0$: 
$$ \tilde x_1 = \frac{x_1 - h_1}{1 - h_1 x_1} + \mathcal{O}(r_1), \quad \tilde h_1 = h_1, \quad \tilde r_1 = r_1.$$
By the implicit function theorem, we can conclude that on $\{\epsilon_1 = 0, \; \lambda_1 = 0\} \cap D_1 $, there exist two families of normally hyperbolic (for $h_1>0$) curves of fixed points denoted as $S_{a,1}(h_1)$ and $S_{r,1}(h_1)$, parametrized by $r_1\in[0,\rho]$ and ending for  $r_1 = 0$ at  $p_{a,1}(h_1)$ and $p_{r,1}(h_1)$, respectively.  For the map \eqref{map_Kahan}, corresponding to difference equation \eqref{Kahan pure} (that is, to \eqref{Kahan for normalform} with all $a_i=0$), the $\mathcal{O}(r_1)$-term vanishes, and the above families are simply given by 
\begin{eqnarray*}
S_{a,1}(h_1) & = &  \{(-1,r_1,0,0, h_1): 0 \leq r_1 \leq \rho\} \cap D_1, \\ 
S_{r,1}(h_1)  & = &  \{(1,r_1,0,0, h_1) : 0 \leq r_1 \leq \rho\} \cap D_1.
\end{eqnarray*}
\item On the invariant set $\{r_1 = 0, \lambda_1 = 0\} \cap D_1$,   the dynamics of $x_1$, $\epsilon_1$ and $h_1$ are given by
\begin{align}  \label{r1l10}
\renewcommand{\arraystretch}{1.6}
\begin{array}{r@{\;\,=\;\,}l}
\tilde x_1 & X_1(x_1,\epsilon_1,0,h_1), \\
 \tilde \epsilon_1 & \epsilon_1 (Y_1(x_1, \epsilon_1, 0, h_1))^{-1}, \\
 \tilde h_1 & h_1 (Y_1(x_1, \epsilon_1,0, h_1))^{1/2}.
\end{array}
\end{align}
We compute the Jacobi matrices of the map~\eqref{r1l10} at $p_{a,1}(h_1)$ and $p_{r,1}(h_1)$, restricting to the invariant set $\{r_1 = 0, \lambda_1 = 0\} \subset D_1$,
\begin{align*}
&A_{a}:=\frac{\partial (\tilde x_1, \tilde \epsilon_1, \tilde h_1)}{\partial (x_1, \epsilon_1, h_1)} (p_{a,1}(h_1)) = \begin{pmatrix}
\frac{1-h_1}{1 + h_1} & \frac{-h_1}{2(1+h_1)}  & 0\\
0 & 1 & 0 \\
0 & - \frac{h_1^2}{2} & 1
\end{pmatrix},
\\ 
&A_{r}:=\frac{\partial (\tilde x_1, \tilde \epsilon_1, \tilde h_1)}{\partial (x_1, \epsilon_1, h_1)} (p_{r,1}(h_1)) = \begin{pmatrix}
\frac{1+h_1}{1 - h_1} & \frac{-h_1}{2(1-h_1)} & 0\\
0 & 1 & 0 \\
0 & \frac{h_1^2}{2} & 1
\end{pmatrix} \,.
\end{align*} 
The matrix $A_{a}$ has a two-dimensional invariant space corresponding to the eigenvalue 1, spanned by the vectors $v_{a}^{(1)}=(0,0,1)^\top$ and $ v_{a}^{(2)} = (-1,4,0)^{\top}$, such that 
$$
(A_a-I)v_{a}^{(1)}=0, \quad (A_a-I)v_{a}^{(2)}=-2h_1^2v_{a}^{(1)}.
$$ 
Similarly, the matrix $A_{r}$ has a two-dimensional invariant space corresponding to the eigenvalue 1, spanned by the vectors $v_{r}^{(1)}=(0,0,1)^\top$ and $v_{r}^{(2)} = (1,4,0)^{\top}$, such that 
$$
(A_r-I)v_{r}^{(1)}=0, \quad (A_r-I)v_{a}^{(2)}=-2h_1^2v_{r}^{(1)}.
$$ 
It is instructive to compare this with the continuous-time case $h_1\to 0$ (see, e.g., \cite[Lemma 2.5]{ks2011}), where both vectors $v_{a}^{(1)}$ and $v_{a}^{(2)}$ are eigenvectors of the corresponding linearized system, with $v_{a}^{(1)}$ being tangent to $S_{a,1}$ and $v_{a}^{(2)}$ corresponding to the center direction in the invariant plane $r_1=0$ (and similarly for $v_{r}^{(1)}$ and $v_{r}^{(2)}$). 
\end{itemize}

We summarize these observations into the following statement.
\begin{proposition} \label{centermanifolds}
For system~\eqref{K1dynamics_Kahan}, there exist a center-stable manifold $\widehat M_{a,1}$ and a center-unstable manifold $\widehat M_{r,1}$,  with the following properties: 
\begin{enumerate}
\item For $i = a, r$, the manifold $\widehat M_{i,1}$ contains the curve of fixed points $S_{i,1}(h_1)$ on $\{\epsilon_1 = 0, \  \lambda_1 = 0\} \subset D_1$, parametrized by $r_1$, and the center manifold $N_{i,1}$ whose branch for $\epsilon_1, h_1 > 0$ is unique (see Figure~\ref{fig:blownup} (b)). In $D_1$, the manifold $\widehat M_{i,1}$ is given as a graph $x_1 = \hat g_i (r_1, \epsilon_1, \lambda_1,h_1)$. 

\item For $i = a, r$, there exist two-dimensional invariant manifolds $M_{i,1}$ which are given as graphs $x_1 = g_i (r_1, \epsilon_1)$.
\end{enumerate}
\end{proposition}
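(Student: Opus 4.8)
The plan is to obtain $\hat M_{\txta,1}$ and $\hat M_{\txtr,1}$ as center manifolds of the curves of fixed points $P_{\txta,1}$ and $P_{\txtr,1}$ of the map~\eqref{K1dynamics_Kahan}, and then to extract the lower-dimensional manifolds $M_{\txta,1}$, $M_{\txtr,1}$ from the first integrals of the blow-up. The starting point is the spectral information already collected above: on the invariant set $\{r_1=\epsilon_1=\lambda_1=0\}$ the map reduces to the M\"obius map $\tilde x_1=(x_1-h_1)/(1-h_1 x_1)$ with the two fixed points $p_{\txta,1}(h_1)$, $p_{\txtr,1}(h_1)$, and the linearizations computed there show that the $x_1$-direction carries the multiplier $\alpha=|(1-h_1)/(1+h_1)|<1$ at $p_{\txta,1}(h_1)$ and $1/\alpha>1$ at $p_{\txtr,1}(h_1)$ for $h_1>0$, while the remaining directions $(r_1,\epsilon_1,\lambda_1,h_1)$ span the center eigenspace with multiplier $1$. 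The $\mathcal{O}(r_1)$- and $\mathcal{O}(\epsilon_1 r_1)$-terms of~\eqref{K1dynamics_Kahan} vanish on $\{r_1=0\}$ and hence do not affect this splitting.

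With the splitting in hand, I would apply the center manifold theorem for the map~\eqref{K1dynamics_Kahan}, carrying $h_1\in[0,\nu]$ along as one of the center coordinates, to obtain a locally invariant manifold through $P_{\txti,1}$ of dimension four, tangent to the center eigenspace; since the center directions are exactly $(r_1,\epsilon_1,\lambda_1,h_1)$, it is a graph $x_1=\hat g_\txti(r_1,\epsilon_1,\lambda_1,h_1)$ on $D_1$ with $\hat g_\txta(0,0,0,h_1)=-1$ and $\hat g_\txtr(0,0,0,h_1)=1$. Because the complementary $x_1$-direction is stable along $P_{\txta,1}$ and unstable along $P_{\txtr,1}$, this produces a center-stable manifold $\hat M_{\txta,1}$ and a center-unstable manifold $\hat M_{\txtr,1}$; in either case the graph automatically contains every invariant set that passes through $P_{\txti,1}$ in the center directions.

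For part~(i) this gives at once that $\hat M_{\txti,1}$ contains the curve $S_{\txti,1}(h_1)$ of fixed points on $\{\epsilon_1=\lambda_1=0\}$, already produced by the implicit function theorem from $\tilde x_1=(x_1-h_1)/(1-h_1 x_1)+\mathcal{O}(r_1)$, as well as the center manifold $N_{\txti,1}$ parametrized by $(h_1,\epsilon_1)$. The uniqueness of the $N_{\txti,1}$-branch over $\{\epsilon_1,h_1>0\}$ is not part of the abstract center manifold statement but follows from the monotonicity estimates read off from~\eqref{r1l10}, namely $\tilde\epsilon_1>\epsilon_1$ and $\tilde h_1<h_1$ near $p_{\txta,1}$ for $x_1\le 0$, together with the reversed inequalities near $p_{\txtr,1}$ for $x_1\ge 1/K$ under the standing choice $\nu<2K/(1+K^2)$ with $K>1$. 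For part~(ii) I would use that $r_1\lambda_1=\lambda$ and $h_1/r_1=h$ are invariant under~\eqref{K1dynamics_Kahan}, being the un-blown-up parameters: fixing the values of $\lambda$ and $h$ slaves $\lambda_1=\lambda/r_1$ and $h_1=h\,r_1$ on a three-dimensional invariant set with coordinates $(x_1,r_1,\epsilon_1)$, whose intersection with $\hat M_{\txti,1}$ is, by the implicit function theorem, precisely the two-dimensional invariant manifold $M_{\txti,1}$ given as the graph $x_1=g_\txti(r_1,\epsilon_1):=\hat g_\txti(r_1,\epsilon_1,\lambda/r_1,h\,r_1)$.

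The main difficulty I expect is the non-uniform hyperbolicity as $h_1\to 0$: at $h_1=0$ the $x_1$-multiplier equals $1$, the hyperbolic/center splitting degenerates, and neither the four-dimensional center manifold nor the $N_{\txti,1}$-branch is unique on $\{h_1=0\}$. This is circumvented by constructing $\hat M_{\txti,1}$ first for $h_1>0$, where the splitting is genuine (and uniform on $\{h_1\ge\eta\}$ for each $\eta>0$), deriving uniform control of the graph $\hat g_\txti$ from the monotonicity of $\tilde\epsilon_1$ and $\tilde h_1$ together with $\nu<1$, and then passing to the limit $h_1\to 0$ while claiming uniqueness only on $\{\epsilon_1,h_1>0\}$ --- which is exactly the bookkeeping already carried out in the paragraph preceding the proposition. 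A secondary, more routine point is that the center manifold theorem is only local, so verifying that $\hat M_{\txti,1}$ is a well-defined graph on the full domain $D_1$ relevant for the subsequent transition analysis again uses the explicit form of~\eqref{K1dynamics_Kahan} and the same monotonicity.
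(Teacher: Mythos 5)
Your proposal is correct and follows essentially the same route as the paper, which itself disposes of part~(i) by appealing to the spectral and monotonicity computations preceding the proposition together with standard center manifold theory, and of part~(ii) by the invariance of $r_1\lambda_1$ and $h_1/r_1$ as in the cited reference \cite{EngelKuehn18}. Your write-up is simply a more detailed expansion of that same argument, including the correct handling of the degenerate splitting at $h_1=0$ and of branch uniqueness for $\epsilon_1,h_1>0$.
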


\begin{proof}
The first part follows by standard center manifold theory (see, e.g., \cite{HPS77}). There exist two-dimensional center manifolds $N_{a,1}$ and $N_{r,1}$, parametrized by $h_1, \epsilon_1$, which at $\epsilon_1 =0$ coincide with the sets of fixed points
\begin{equation} \label{setoffixedpoints}
P_{a,1} = \{p_{a,1}(h_1)\,:\, 0 \leq h_1 \leq \nu\} \quad \text{and} \quad  P_{r,1} = \{p_{r,1}(h_1)\,:\, 0 \leq h_1 \leq \nu\},
\end{equation}
respectively (see Figure~\ref{fig:blownup} (b)).
Note that, by \eqref{r1l10},  on $\{r_1 = 0, \ \lambda_1 = 0, \ h_1 > 0\} \cap D_1$ we have $\tilde \epsilon_1 > \epsilon_1$ and $\tilde h_1 < h_1$ for $ x_1 \leq 0$. Hence, for $\delta$ small enough, the branch of the manifold $N_{a,1}$ on $\{r_1 = 0, \epsilon_1 > 0, \lambda_1 = 0, h_1 > 0\} \cap D_1$  is unique. On the other hand, we observe that for $x_1\geq \frac{1}{K}$ with a constant $K> 1$, we have $\tilde \epsilon_1 < \epsilon_1$ and $\tilde h_1 > h_1$, if and only if $h_1 < \frac{2K}{1+K^2}$. Thus, for $x_1$ from a neighborhood of $1$, we see that $\nu <\frac{2K}{1+K^2} < 1$ guarantees that, for $\delta$ small enough depending on $K$, the branch of the manifold $N_{r,1}$ on $\{r_1 = 0, \  \epsilon_1 > 0,\ \lambda_1 = 0, \ h_1 > 0\} \cap D_1$ is unique.
\smallskip

The second part follows from the invariances $\tilde r_1 \tilde \lambda_1 = r_1 \lambda_1$ and $\tilde h_1/\tilde r_1 = h_1 / r_1$, compare \cite[Proposition 3.3 and Figure 2]{EngelKuehn18} for details.
\end{proof}

\subsection{Dynamics in the scaling chart $K_2$} 
\label{sec:KahanK2}

Next, we investigate the dynamics in the scaling chart $K_2$, in order to find a trajectory connecting $\widehat M_{a,1}$ with $\widehat M_{r,1}$, or $ M_{a,1}$ with $ M_{r,1}$ respectively.
Recall from \eqref{K2dis} that in chart $K_2$ we have
\begin{equation} \label{K2dis2}
x = r_2 x_2, \quad y = r_2^2 y_2, \quad \epsilon = r_2^2 , \quad \lambda = r_2 \lambda_2, \quad h = h_2/r_2\,.
\end{equation}
In this chart and upon the time rescaling $t=t_2/r_2$, equation~\eqref{ODE_for_Kahan} takes the form 
\begin{align} \label{ODE_Kahan_K2}
\renewcommand{\arraystretch}{1.3}
\begin{array}{r@{\;\,=\;\,}l}
x'_2 & - y_2 +  x_2^2 + r_2 (a_1 x_2 -a_2x_2y_2), \\
y'_2 & x_2 - \lambda_2 + r_2 (a_4x_2^2 + a_5y_2),
\end{array}
\end{align}
where the prime now denotes the derivative with respect to $t_2$, compare \eqref{K2_hard}.
Since in this chart $r_2=\sqrt{\epsilon}$ is not a dynamical variable (remains fixed in time), we will not write down explicitly differential, resp. difference evolution equations for $\lambda_2=\lambda/\sqrt{\epsilon}$ and for $h_2=h\sqrt{\epsilon}$. We will restore these variables as we come to the matching with the chart $K_1$. 
The Kahan discretization of equation~\eqref{ODE_Kahan_K2} with the time step $h_2$ can be written as
\begin{align}\label{Kahan_expan}
\renewcommand{\arraystretch}{1.3}
\begin{array}{r@{\;\,=\;\,}l}
\tilde x_2  & F_{1}(x_2, y_2, h_2)  + r_2 \hat G_1(x_2, y_2, h_2) + \lambda_2 \hat J_1(x_2, h_2), \\
\tilde y_2  & F_{2}(x_2, y_2, h_2) + r_2 \hat G_2(x_2, y_2, h_2) + \lambda_2 \hat J_2(x_2, h_2),\end{array}
\end{align} 
On the blow-up manifold $r_2=0$, we are dealing with the simple model system
\begin{equation}\label{Kahan_model}
\frac{1}{h_2} (\tilde x_2 - x_2)= x_2\tilde x_2 - \frac{1}{2}(y_2 + \tilde y_2), \quad \frac{1}{h_2}(\tilde y_2 - y_2) = \frac{1}{2}(x_2 +\tilde x_2) - \lambda_2.
\end{equation} 
This yields the birational map
\begin{align} \label{K2dynamics_Kahan}
\renewcommand{\arraystretch}{2.2}
\begin{array}{r@{\;\,=\;\,}l}
\tilde x_2 & \dfrac{x_2 - h_2y_2 - \frac{h_2^2}{4}  x_2 + \frac{h_2^2}{2}\lambda_2}{ 1- h_2x_2 + \frac{h_2^2}{4}} ,  \\
\tilde y_2 & \dfrac{y_2  + h_2 x_2- h_2 x_2 y_2- h_2 \lambda_2- \frac{h_2^2}{2}  x_2^2  + h_2^2\lambda_2 x_2   - \frac{h_2^2}{4} y_2}{ 1- h_2x_2 + \frac{h_2^2}{4}} .
\end{array}
\end{align}
This gives the following expressions for the map $F =(F_1, F_2)$ and $\hat J =(\hat J_1, \hat J_2)$ in \eqref{Kahan_expan}:
\begin{align} \label{map_Kahan_simple}
\renewcommand{\arraystretch}{2.2}
\begin{array}{r@{\;\,=\;\,}l}
\tilde x_2 & F_1(x_2,y_2,h_2)= \dfrac{x_2 - h_2y_2 - \frac{h_2^2}{4} x_2}{ 1- h_2x_2 + \frac{h_2^2}{4}}, \\
\tilde y_2 & F_2(x_2,y_2,h_2)=\dfrac{y_2 + h_2 x_2- h_2x_2 y_2- \frac{h_2^2}{2} x_2^2  - \frac{h_2^2}{4} y_2}{ 1- h_2x_2 + \frac{h_2^2}{4}},
\end{array}
\end{align}
and 
\begin{align} \label{J}
\renewcommand{\arraystretch}{2.2}
\begin{array}{r@{\;\,=\;\,}l}
\hat  J_1(x_2, h_2) &  \dfrac{\frac{h_2^2}{2}}{1 - h_2 x_2 + \frac{h_2^2}{4}}, \\
\hat J_2(x_2, h_2) & \dfrac{-h_2+h_2^2 x_2}{1 - h_2 x_2 + \frac{h_2^2}{4}}.
\end{array}
\end{align}
Explicit expressions for the functions $\hat G_1$ and $\hat G_2$ can be easily obtained, as well, but are  omitted here due to their length.

\subsection{Dynamical properties of the model map in the scaling chart} \label{sec:dyn_prop_K2}

For a better readability, we omit index ``2'' referring to the chart $K_2$ starting from here. In particular, we write $x$, $y$, $r$, $\lambda$, $h$ for $x_2$, $y_2$, $r_2$, $\lambda_2$, $h_2$ rather than for the original variables (before rescaling). 
Similarly to the continuous-time case, we start the analysis in $K_2$ with
the case $\lambda=0$, $r = 0$ for $h > 0$ fixed. This means that we study the dynamics of the map given by $F$ \eqref{map_Kahan_simple},
\begin{equation} \label{map_Kahan_simple 0}
\renewcommand{\arraystretch}{2.2}
F: \quad \left\{
\begin{array}{r@{\;\,=\;\,}l}
\tilde x &  \dfrac{x - hy - \frac{h^2}{4} x}{ 1- hx + \frac{h^2}{4}}, \\
\tilde y & \dfrac{y + h x- hxy - \frac{h^2}{2} x^2  - \frac{h^2}{4} y}{ 1- hx + \frac{h^2}{4}},
\end{array} \right.
\end{equation}
which comes as the solution of the difference equation 
\begin{equation}\label{Kahan_model 0}
\frac{1}{h} (\tilde x - x)= x\tilde x - \frac{1}{2}(y+ \tilde y), \quad \frac{1}{h}(\tilde y - y) = \frac{1}{2}(x +\tilde x).
\end{equation} 
We discuss in detail the most important properties of the model map \eqref{map_Kahan_simple 0}.
\subsubsection{Formal integral of motion} \label{sec:conservedquant}

Recall that, for $r = \lambda = 0$, the ODE system~\eqref{K2_easy} in the chart $K_2$ has a conserved quantity \eqref{firstintegral}. Its
level set $H(x,y)=0$ supports the special canard solution \eqref{specialsolution},
\begin{equation*}
\gamma_{0,2}(t_2) = \Big(\frac{1}{2} t_2, \frac{1}{4}  t_2^2 - \frac{1}{2}\Big)^\top.
\end{equation*}

In general, Kahan discretization has a distinguished property of possessing a conserved quantity for unusually numerous instances of quadratic vector fields. For \eqref{K2_easy}, it turns out to possess a formal conserved quantity in the form of an asymptotic power series in $h$. However, there are indications that this power series is divergent, so that map $F$ \eqref{map_Kahan_simple 0} does not possess a true integral of motion. Nevertheless, it possesses all nice properties of symplectic or Poisson integrators, in particular, a truncated formal integral is very well preserved on very long intervals of time. Moreover, as we will now demonstrate, the zero level set of the formal conserved quantity supports the special family of solutions of the discrete time system crucial for our main results.
\smallskip

We recall a method for constructing a formal conserved quantity
\begin{equation} \label{Hhexpansion}
\bar H( z, h) = H(z) + h^2 H_2(z) + h^4 H_4(z) + h^6 H_6(z) + \dots 
\end{equation}
for the the Kahan discretization $F_f$~\eqref{Kahanexplicit} for an ODE of the form~\eqref{genODE} admitting a smooth conserved quantity $H: \mathbb{R}^n \to \mathbb{R}$. The latter means that
\begin{equation} \label{conservation}
\sum_{i=1}^n \frac{\partial H(z)}{\partial z_i} f_i(z) = 0.
\end{equation}
The ansatz \eqref{Hhexpansion} containing only even powers of $h$ is justified by the fact that the Kahan method is a symmetric linear discretization scheme. Writing $\tilde z =F_f(z,h)$, we formulate our requirement of $\bar H$ being an integral of motion for $F_f$ as $\bar H(z,h)=\bar H( \tilde z, h)$ on $ \mathbb{R}^n \times [0,h_0]$, i.e., up to terms $ \mathcal{O}(h^4)$,
\begin{equation} \label{barHexpansion}
H(\tilde z) + h^2 H_2(\tilde z)  = H(z) + h^2 H_2(z) + \mathcal{O}(h^4).
\end{equation}
To compute the Taylor expansion of the left hand side, we observe:
\begin{align*}
H(\tilde z) &= H\left(z + h f(z) + \frac{h^2}{2} f(z)\rmD f(z) + \mathcal{O}(h^3)\right) \\
&= H(z) + h \sum_{i=1}^n \frac{\partial H(z)}{\partial z_i} f_i(z) \\
&+ \frac{h^2}{2} \left( \sum_{i,j=1}^n \frac{\partial^2 H(z)}{\partial z_i \partial z_j} f_i(z) f_j(z) + \sum_{i,j=1}^n \frac{\partial H(z)}{\partial z_i} \frac{\partial f_i(z)}{\partial z_j}  f_j(z) \right) + \mathcal{O}(h^3).
\end{align*}
Here, the $h$ and the $h^2$ terms vanish, as follows from \eqref{conservation} and its Lie derivative:
\begin{equation} \label{conservation_lie}
0 = \sum_{j=1}^n \frac{\partial}{\partial z_j}\left(\sum_{i=1}^n \frac{\partial H(z)}{\partial z_i} f_i(z)\right) f_j(z) = \sum_{i,j=1}^n \frac{\partial^2 H(z)}{\partial z_i \partial z_j} f_i(z) f_j(z) + \sum_{i,j=1}^n \frac{\partial H(z)}{\partial z_i} \frac{\partial f_i(z)}{\partial z_j}  f_j(z).
\end{equation}
Thus, we find: $H(\tilde z) = H(z) + \mathcal{O}(h^3)$, or, more precisely,
\begin{equation} \label{Hexpansion}
H(\tilde z) = H(z) + h^3 G_3(z) + h^4 G_4(z) + h^5 G_5(z) + \dots.
\end{equation}
Plugging this, as well as a Taylor expansion of $H_2(\tilde z)$ similar to $H(\tilde z)$, into \eqref{barHexpansion}, we see that vanishing of the $h^3$ terms is equivalent to
\begin{equation} \label{DiffEqu}
\sum_{i=1}^n\frac{\partial H_2(z)}{\partial z_i} f_i(z)  = -G_3(z).
\end{equation}
This is a linear PDE defining $H_2$ up to an additive term which is an arbitrary function of $H$.

Following terms $H_4, H_6, \dots$ can be determined in a similar manner, from linear PDEs like \eqref{DiffEqu} with recursively determined functions on the right hand side.

We now apply this scheme to obtain (the first terms of) the formal conserved quantity $\bar H(x,y,h)$ for~\eqref{map_Kahan_simple}. It turns out to be possible to find it in the form
\begin{equation} \label{barH}
\bar H(x,y, h) \approx H(x,y) +  \sum_{k=1}^\infty h^{2k} H_{2k}(x,y),
\end{equation}
where 
\begin{equation}
H(x, y) = e^{-2 y} \big( y - x^2 + \frac{1}{2} \big)\quad {\rm and} \quad H_{2k}(x,y)=e^{-2y}\bar H_{2k}(x,y),
\end{equation}
with $\bar H_{2k}(x,y)$ being polynomials of degree $2k+2$. The symbol $\approx$ reminds that this is only a formal asymptotic series which does not converge to a smooth conserved quantity. A Taylor expansion of $H(\tilde x, \tilde y)$ as in~\eqref{Hexpansion} gives
$$ 
H(\tilde x, \tilde y) = H( x,  y) + h^3 G_3(x,y) + \mathcal{O}(h^4),
$$
with
$$ 
G_3(x,y) = \frac{1}{3} e^{-2 y}  (x^3 + x^5 - 4 x^3 y + 3 xy^2).
$$
The differential equation~\eqref{DiffEqu} reads in the present case:
\begin{equation} \label{DiffEquspec}
 (x^2 - y)\frac{\partial }{\partial x}\big(e^{-2 y} \bar H_2(x,y)\big) + x\frac{\partial }{\partial y} \big(e^{-2 y} \bar H_2(x,y)\big)=-G_3(x,y). 
\end{equation} 
A solution for $\bar H_2$ which is a polynomial of degree 4 reads:
\begin{equation} \label{H2}
\bar H_2(x,y) = \frac{1}{3} \big(x^2-\frac{x^4}{2} + (y- x^2)(y -y^2)\big).
\end{equation}  
Hence, we obtain the approximation
\begin{equation} \label{barH2}
\bar H(x,y, h) =e^{-2 y} \big( y - x^2 + \frac{1}{2} \big) + \frac{h^2}{3} e^{-2y}  \big(x^2-\frac{x^4}{2} + (y- x^2)(y -y^2)\big) + \mathcal{O}(h^4).
\end{equation}
A straightforward computation shows that on the curve $y-x^2+\frac{1}{2}=0$ (the level set $H(x,y)=0$), the function $\bar H_2(x,y)$ takes a constant value $\frac{1}{8}$. Therefore, the level set $\bar H(x,y,h)=0$ is given, up to $\mathcal{O}(h^4)$, by
\begin{equation} \label{gamma from H2}
\varphi_{h}(x,y)=y - x^2 + \frac{1}{2}+\frac{h^2}{8} =0.
\end{equation}
Remarkably, we have the following statement.
\begin{proposition}
The curve \eqref{gamma from H2} represents a zero level set of the (divergent) formal integral $\bar H(x,y,h)$. More precisely, on this curve
$$
H(x,y)+\sum_{k=1}^n h^{2k} H_{2k}(x,y)=\mathcal O(h^{2n+2}).
$$
\end{proposition}
We will not prove this statement, but rather derive a different dynamical characterization of the curve \eqref{gamma from H2}. 

\subsubsection{Invariant measure} \label{sec:invariant_measure}

\begin{proposition}
The map $F$ given by \eqref{map_Kahan_simple 0} admits an invariant measure
\begin{equation} \label{Kahan_invariant_measure}
\mu_{h} = \frac{\rmd x \wedge \rmd y}{|\varphi_{h}(x,y)|} 
\end{equation}
with $\varphi_{h}(x,y)$ given in \eqref{gamma from H2}. This measure $\mu_h$ is singular on the curve $\varphi_h(x,y)=0$.
\end{proposition}
\begin{proof}
Difference equations \eqref{Kahan_model 0} can be written as a linear system for $(\tilde x,\tilde y)$:
$$
\renewcommand{\arraystretch}{1.3}
\begin{pmatrix}
1 - h x & \frac{h}{2} \\ 
-\frac{h}{2} & 1
\end{pmatrix} 
\begin{pmatrix}
\tilde x \\ \tilde y
\end{pmatrix} = 
\begin{pmatrix}
x - \frac{h}{2}y \\ y + \frac{h}{2}x
\end{pmatrix}.$$
Differentiating with respect to $x,y$, we obtain:
$$
\renewcommand{\arraystretch}{1.3}
\begin{pmatrix}
1 - h x & \frac{h}{2} \\
-\frac{h}{2} & 1
\end{pmatrix} \begin{pmatrix}
 \frac{\partial \tilde x}{\partial x} & \frac{\partial \tilde x}{\partial y} \\
\frac{\partial \tilde y}{\partial x} & \frac{\partial \tilde y}{\partial y} 
\end{pmatrix} = \begin{pmatrix}
1 + h \tilde x & - \frac{h}{2} \\
\frac{h}{2} & 1
\end{pmatrix}.
$$
Computing determinants, we find:
\begin{equation}\label{det J prelim}
 \det \frac{\partial(\tilde x, \tilde y)}{\partial (x,y)} = \frac{1 + h \tilde x + \frac{h^2}{4}}{1 - h x + \frac{h^2}{4}}.
\end{equation}

Next, we derive from the first equation in \eqref{map_Kahan_simple 0}:
$$
\tilde x-x=\frac{-hy+hx^2-\frac{h^2}{2}x}{1-hx+\frac{h^2}{4}}.
$$
Since the system \eqref{Kahan_model 0} is symmetric with respect to interchanging $(x,y)\leftrightarrow(\tilde x, \tilde y)$ with the simultaneous change $h\mapsto -h$, we can perform this operation in the latter equation, resulting in
$$
x-\tilde x=\frac{h\tilde y-h\tilde x^2-\frac{h^2}{2}\tilde x}{1+h\tilde x+\frac{h^2}{4}}.
$$
Comparing the last two formulas, we obtain:
$$
\frac{y-x^2+\frac{h}{2}x}{1-hx+\frac{h^2}{4}}=\frac{\tilde y-\tilde x^2-\frac{h}{2}\tilde x}{1+h\tilde x+\frac{h^2}{4}},
$$
or, equivalently,
\begin{equation}
\frac{y-x^2+\frac{1}{2}+\frac{h^2}{4}}{1-hx+\frac{h^2}{4}}=\frac{\tilde y-\tilde x^2+\frac{1}{2}+\frac{h^2}{4}}{1+h\tilde x+\frac{h^2}{4}}.
\end{equation}
Together with \eqref{det J prelim}, this results in
\begin{equation}\label{det J} 
 \det \frac{\partial(\tilde x, \tilde y)}{\partial (x,y)}  = \frac{\varphi_{h}(\tilde x, \tilde y)}{\varphi_{h}(x,y)},
\end{equation}
which is equivalent to the statement of proposition. 
\end{proof}

\subsubsection{Invariant separating curve}

It turns out that the singular curve of the invariant measure $\mu_h$ is an invariant curve under the map \eqref{map_Kahan_simple 0}.
\begin{proposition} \label{prop:Kahan_invariant}
The parabola
\begin{equation} \label{Kahan_invariant}
S_{h} : = \left\{ (x,y) \in \mathbb{R}^2 \, : \, y = x^2 - \frac{1}{2} - \frac{h^2}{8} \right\} 
\end{equation}
is invariant under the map $F$ given by \eqref{map_Kahan_simple 0}. Solutions on $S_{h}$ are given by
\begin{equation}\label{Kahan_sol_x0}
\renewcommand{\arraystretch}{2.2}
\gamma_{h, x_0}(n) = \begin{pmatrix} x_0+ \dfrac{hn}{2} \\ x_0^2+hnx_0+  \dfrac{h^2n^2}{4} - \dfrac{1}{2} - \dfrac{h^2}{8} \end{pmatrix}, \quad n \in \mathbb{Z}.
\end{equation}
For $(x,y)\in S_h$, we have: 
\begin{equation}\label{ineq x}
\left|\frac{\partial\tilde x}{\partial x}\right|  \quad \left\{ \begin{array}{ll} < 1 & \mathrm{for\;\;} x < 0, \\ =1 & \mathrm{ for \;\;}x=0,\\ >1 & \mathrm{for\;\;} x>0. \end{array}\right.
\end{equation}
\end{proposition}
\begin{proof}
Plugging $y= x^2 - \frac{1}{2} - \frac{h^2}{8}$ into formulas  \eqref{map_Kahan_simple 0}, we obtain upon a straightforward computation:
$$
\tilde x=x+\frac{h}{2}, \quad \tilde y= \Big(x+\frac{h}{2}\Big)^2-\frac{1}{2}-\frac{h^2}{8}.
$$
This proves the first two claims.

As for the last claim, we compute by differentiating the first equation in \eqref{map_Kahan_simple 0}:
\begin{equation} \label{KahanJacobian}
\frac{\partial \tilde x}{\partial x}   = 
\frac{1-h^2y - \frac{h^4}{16}}{\left(1-hx+\frac{h^2}{4} \right)^2}.
\end{equation}
For $(x,y)\in S_h$, this gives:
$$ 
\frac{\partial \tilde x}{\partial x} =  \dfrac{\left(1+\frac{h^2}{4}  \right)^2- h^2 x^2 }{ \left(1-hx+\frac{h^2}{4} \right)^2} 
= \dfrac{1+hx + \frac{h^2}{4}}{1- hx +\frac{h^2}{4}},
$$
which implies inequalities \eqref{ineq x}. (We remark that the right hand side tends to infinity as $x\to (1+\frac{h^2}{4})/h$.)
\end{proof}

The invariant set $S_h$ \eqref{Kahan_invariant} plays the role of a separatrix for $F$ \eqref{map_Kahan_simple}: bounded orbits of $F$ lie above $S_h$, while unbounded orbits of $F$ lie below $S_h$, as illustrated in Figures~\ref{fig:kahan_1}, \ref{fig:kahan_2}.

\begin{figure}[H]
\centering
\begin{overpic}[width=0.7\linewidth]{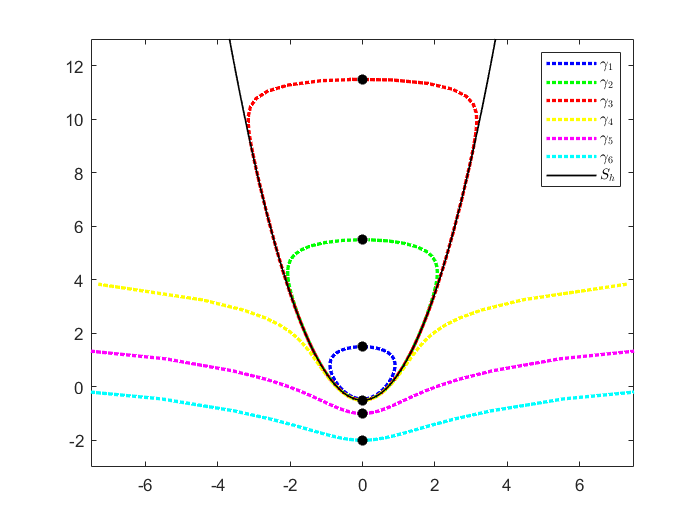}
 \put(54,2){$x$}
      \put(6,40){ $y$}
      \end{overpic}
  \caption{Trajectories for the Kahan map $F$ in chart $K_2$ \eqref{map_Kahan_simple} with $h = 0.01$ for different initial points $(x_{2,0},y_{2,0})$ (black dots): three bounded orbits above the separatrix $S_{h}$, and three unbounded orbits below the separatrix $S_h$. }
\label{fig:kahan_1}
\end{figure}
\begin{figure}[H]
\centering
\captionsetup[sub]{justification=centering}
\begin{subfigure}{0.45\textwidth}
\centering
\begin{overpic}[width=0.8\linewidth]{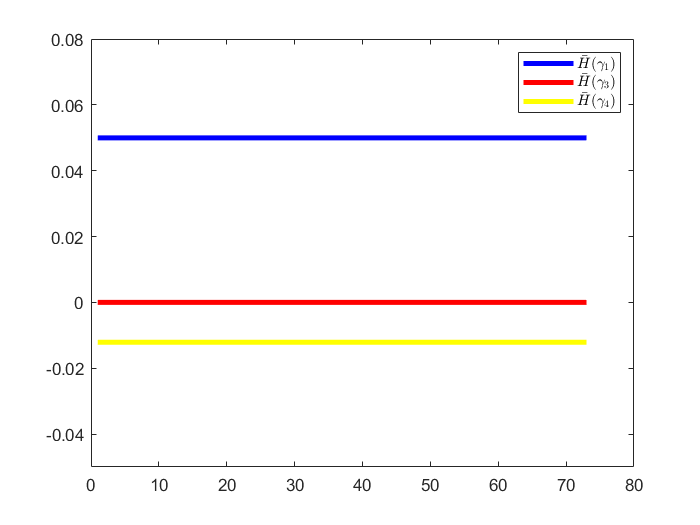}
 \put(45,0){\scriptsize time $n$}
\end{overpic}
 \caption{ }
 \end{subfigure}
 \begin{subfigure}{.45\textwidth}
 \centering
\begin{overpic}[width=0.8\linewidth]{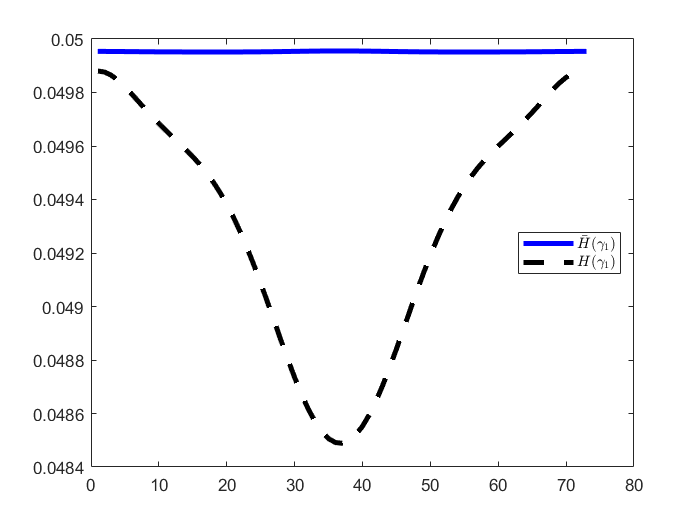}
 \put(45,0){\scriptsize time $n$}
\end{overpic}
  \caption{}
 \end{subfigure}
  \begin{subfigure}{.45\textwidth}
  \centering
\begin{overpic}[width=0.8\linewidth]{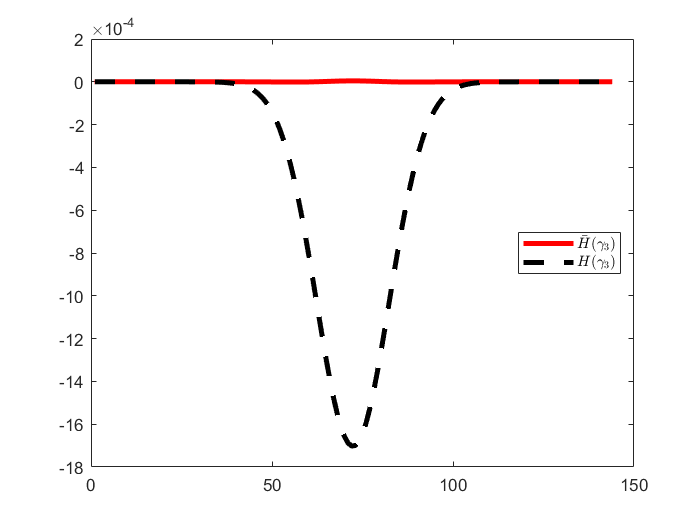}
 \put(45,0){\scriptsize time $n$}
\end{overpic}
  \caption{}
 \end{subfigure}
   \begin{subfigure}{.45\textwidth}
   \centering
\begin{overpic}[width=0.8\linewidth]{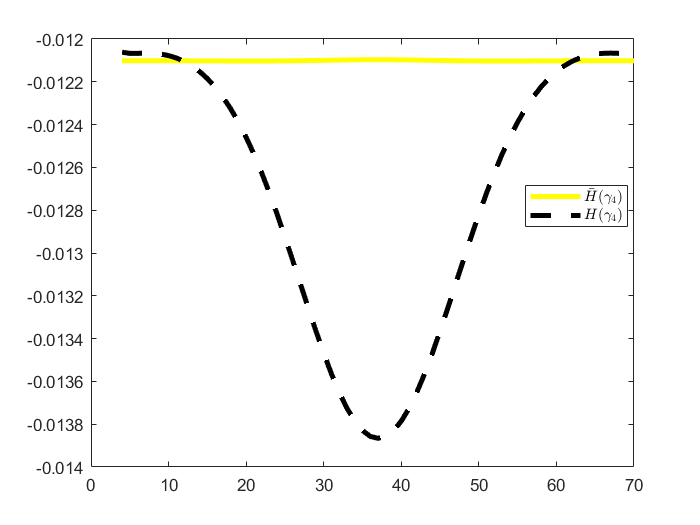}
 \put(45,0){\scriptsize time $n$}
\end{overpic}
  \caption{}
 \end{subfigure}
\caption{Approximation of $\bar H$ along the corresponding trajectories $\gamma_1, \gamma_3, \gamma_4$ from Fig.~\ref{fig:kahan_1}, showing the levels of $\bar H\simeq H+h^2H_2$ (a) which are then compared with $H$ for $\gamma_1$ (b), $\gamma_3$ (c) and $\gamma_4$ (d). }
\label{fig:kahan_2}
\end{figure}

We can show the following connection to the chart $K_1$:
\begin{lemma} \label{specialcont}
The trajectory $\gamma_{h}(n)$, transformed into the chart $K_1$ via 
$$\gamma_{h}^1(n) = \kappa_{21}  (\gamma_{h}(n), h)  $$
for large $\left| n \right|$, lies in                               
$\widehat M_{a,1}$ as well as in $\widehat M_{r,1}$.
\end{lemma}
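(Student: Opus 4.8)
The plan is to show that the curve $\gamma_h^1(n) = \kappa_{21}(\gamma_h(n), h)$, obtained by transporting the explicit separatrix solution $\gamma_h$ from chart $K_2$ into chart $K_1$, eventually enters (for large $|n|$) the center-stable manifold $\hat M_{\txta,1}$ and the center-unstable manifold $\hat M_{\txtr,1}$ constructed in Proposition~\ref{centermanifolds}. First I would write out $\gamma_h^1(n)$ explicitly. Using $\kappa_{21}$ from~\eqref{kappa21} with $x_2 = \tfrac{h}{2}n$, $y_2 = \tfrac{h^2}{4}n^2 - \tfrac12 - \tfrac{h^2}{8}$, $r_2 = 0$, $\lambda_2 = 0$, $h_2 = h$, one gets
\begin{equation*}
x_1(n) = \frac{\tfrac{h}{2}n}{\sqrt{\tfrac{h^2}{4}n^2 - \tfrac12 - \tfrac{h^2}{8}}}, \quad r_1(n) = 0, \quad \epsilon_1(n) = \left(\tfrac{h^2}{4}n^2 - \tfrac12 - \tfrac{h^2}{8}\right)^{-1}, \quad h_1(n) = h\sqrt{\tfrac{h^2}{4}n^2 - \tfrac12 - \tfrac{h^2}{8}},
\end{equation*}
with $\lambda_1(n) = 0$. (Note that for $|n|$ large enough the radicand is positive, which is why the statement restricts to large $|n|$; also $h_1$ grows, so strictly one should work in the rescaled-in-$r_1$ formulation or on the relevant invariant leaf — this is a bookkeeping point to handle carefully, cf.\ the invariances $\tilde r_1\tilde\lambda_1 = r_1\lambda_1$ and $\tilde h_1/\tilde r_1 = h_1/r_1$ noted in Proposition~\ref{centermanifolds}.)

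The key structural fact is that the manifolds $\hat M_{\txta,1}$ and $\hat M_{\txtr,1}$ are characterized in $D_1$ as graphs $x_1 = \hat g_\txta(r_1,\epsilon_1,\lambda_1,h_1)$, $x_1 = \hat g_\txtr(\dots)$, emanating from the fixed point sets $P_{\txta,1}$, $P_{\txtr,1}$; so the cleanest argument is to invoke uniqueness of the center-(un)stable manifold. Concretely, I would argue as follows: as $n \to +\infty$, the point $\gamma_h^1(n)$ converges to $p_{\txtr,1}(h') = (1,0,0,0,h')$ for the appropriate limiting $h'$ (indeed $x_1(n) \to \text{sign}(n)$, $r_1 = 0$, $\epsilon_1(n)\to 0$), i.e.\ it limits onto $P_{\txtr,1}$, which is the repelling (in $x_1$) part of the fixed-point set; and as $n\to -\infty$ it limits onto $P_{\txta,1}$. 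Since $\gamma_h$ is an exact orbit of $P^0$ and $\kappa_{12}$ conjugates the $K_2$-dynamics to the $K_1$-dynamics where both maps are defined, the sequence $(\gamma_h^1(n))_{n}$ is an honest orbit of the $K_1$-map~\eqref{K1dynamics_Kahan} restricted to $\{r_1 = 0, \lambda_1 = 0\}$ (up to the reparametrization of $n$ coming from the nonlinear time change in $\kappa_{12}$). An orbit lying on $\{r_1 = 0, \lambda_1 = 0\}$ that converges to $p_{\txta,1}(h_1)$ in backward time must lie on the unique center-stable manifold of that fixed point, hence in $\hat M_{\txta,1}$; likewise, convergence to $p_{\txtr,1}(h_1)$ in forward time forces the tail to lie in the center-unstable manifold $\hat M_{\txtr,1}$. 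The uniqueness statements for the relevant branches with $\epsilon_1, h_1 > 0$ are exactly those established just before Proposition~\ref{centermanifolds} (the sign conditions on $\tilde\epsilon_1 - \epsilon_1$, $\tilde h_1 - h_1$ and the constraint $\nu < 2K/(1+K^2)$).

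A more computational but equally valid route, which I would use to make the above rigorous and to avoid subtleties about which exact center manifold is hit, is to verify directly that $\gamma_h^1(n)$ satisfies the defining graph relation: on $\{r_1 = 0, \lambda_1 = 0\}$ the manifolds are graphs over $(\epsilon_1, h_1)$, and one checks that the explicit $x_1(n)$ above equals $\hat g_\txti$ evaluated at $(\epsilon_1(n), h_1(n))$ for the relevant branch $\txti$, at least asymptotically as $|n|\to\infty$, using that $\gamma_h^1$ solves the $K_1$ recursion~\eqref{K1dynamics_Kahan} (restricted to the invariant set, where it reduces to~\eqref{r1l10}) together with the fact that the graph is the unique invariant one with the prescribed boundary behaviour at $r_1 = \epsilon_1 = 0$. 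I expect the main obstacle to be precisely this matching: one must carefully track how the discrete time index $n$ in $K_2$ is reparametrized under $\kappa_{12}$ (so that $(\gamma_h^1(n))$ is genuinely an orbit of the $K_1$-map, not merely a reparametrized curve), and one must verify that the asymptotic approach of $\gamma_h^1(n)$ to $P_{\txtr,1}$ (resp.\ $P_{\txta,1}$) is along the stable/unstable directions identified by the Jacobians~\eqref{r1l10} — i.e.\ that the curve does not accidentally leave the domain $D_1$ or approach the fixed point tangentially to a ``wrong'' direction — which is where the earlier sign analysis ($\tilde\epsilon_1 \gtrless \epsilon_1$, $\tilde h_1 \gtrless h_1$) and the constant $K$ enter. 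The rest is the ``easy calculation'' of plugging~\eqref{Kahan_specsol} into $\kappa_{21}$ and into~\eqref{r1l10}.
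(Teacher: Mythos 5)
Your proposal is correct and follows essentially the same route as the paper: transport $\gamma_h$ into $K_1$ via $\kappa_{21}$, note that $\epsilon_1(n)\to 0$ and $x_1(n)\to\pm1$ for large $|n|$ so that the orbit, lying on the invariant curve $\kappa_{21}(S_h,h)$, must coincide with the unique branches $N_{\txta,1}$ and $N_{\txtr,1}$ of Proposition~\ref{centermanifolds}, hence lies in $\hat M_{\txta,1}$ and $\hat M_{\txtr,1}$. Your two caveats are reasonable but not obstacles: there is no time reparametrization to track since $\kappa_{21}$ is a pure coordinate conjugation of the maps, and the growth of $h_1(n)=h/\sqrt{\epsilon_1(n)}$ is handled exactly as the paper does, by taking $h$ small so that the window of $n$ with $h^2/\nu^2\le\epsilon_1(n)\le\delta$ is nonempty and the orbit meets $D_1$ near the fixed-point sets $P_{\txta,1}$, $P_{\txtr,1}$.
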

\begin{proof}
From~\eqref{kappa21} there follows that for sufficiently large $\left|n\right|$, the component $\epsilon_1(n)$ of $\gamma_{h}^1(n)$ is sufficiently small such that $\gamma_{h}^1$, which lies on the invariant manifold $\kappa_{21}(S_{h},h)$, has to be in $N_{a,1}$ for $n <0$, and in $N_{r,1}$ for $n > 0$ respectively, due to the uniqueness of the invariant center manifolds (see Proposition~\ref{centermanifolds}). In particular, observe that, if $h$ is small enough, $\gamma_{h}^1$ reaches an arbitrarily close vicinity of some $p_{a,1}(h_1^*)$ for sufficiently large $n <0$ and of some $p_{r,1}(h_1^*)$ for sufficiently large $n >0$, within $N_{a,1} \subset \widehat M_{a,1}$ and $N_{r,1} \subset \widehat M_{r,1}$ respectively (see also Figure~\ref{fig:blownup} (b)). This finishes the proof.
\end{proof}

The trajectory $\gamma_{h}$ is shown in global blow-up coordinates as $\gamma_{\bar h}$ in Figure~\ref{fig:blownup} (a), in comparison to the ODE trajectory $\bar \gamma_0$ corresponding to $\gamma_{0, 2}$ in $K_2$. 
\begin{figure}[htbp]
        \centering
      \begin{subfigure}[t]{.5\textwidth}
        \centering
  		\begin{overpic}[width=.9\textwidth]{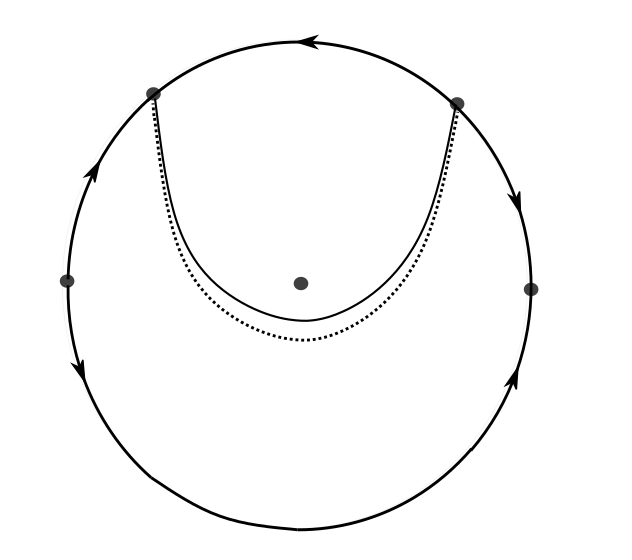}   
                \put(21,78){\scriptsize $\bar p_{\txta}(\bar h)$}
                \put(70,76){\scriptsize $\bar p_{\txtr}(\bar h)$}
                \put(1,46){\scriptsize $\bar q^{\textnormal{in}}(\bar h)$}
                \put(85,45){\scriptsize $\bar q^{\textnormal{out}}(\bar h)$}
                \put(55,44){\small $\bar \gamma_{0}$}
                \put(55,34){\small $\gamma_{\bar h}$}
        \end{overpic}
         \caption{Dynamics on
      $S^{2,+} \times \{0\} \times \{0\} \times \{\bar h \}$, where $S^{2,+}$ denotes the upper hemisphere}
        \label{fig:S2}
		\end{subfigure}%
        \begin{subfigure}[t]{.5\textwidth}
        \centering
  		\begin{overpic}[width=.9\textwidth]{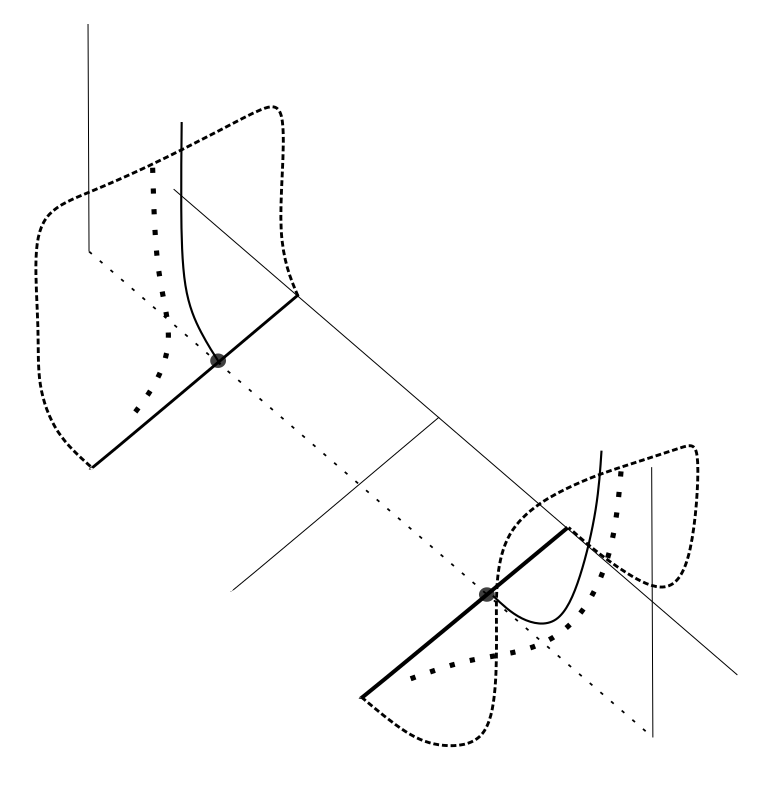}   
           \put(89,12){ \scriptsize $x_1$} 
            \put(4,92){ \scriptsize $\epsilon_1$} 
             \put(27,23){ \scriptsize $h_1$}
             \put(36,78){ \scriptsize $N_{\txtr,1}$} 
             \put(85,47){ \scriptsize $N_{\txta,1}$} 
             \put(38,16){\scriptsize $P_{\txta,1}$}
             \put(20,44){\scriptsize $P_{\txtr,1}$}
              \put(46,26){\tiny $ p_{\txta,1}(h_1^*)$}
                \put(29,54){\tiny $ p_{\txtr,1}(h_1^*)$} 
               \put(72,20){\scriptsize $\gamma_{h}^1$}  
               \put(14,73){\scriptsize $\gamma_{h}^1$}  
               \put(18,87){\scriptsize $\gamma_{0,1}$}  
               \put(72,45){\scriptsize $\gamma_{0,1}$}
        \end{overpic}
        \caption{Dynamics in $K_1$ for $r_1=\lambda_1 =0$}
        \label{fig:NaNr}
		\end{subfigure}
\caption{The trajectory $\gamma_{\bar h}$ in global blow-up coordinates for $r = \bar{\lambda} = 0$ and a fixed $\bar h > 0$ (a), and as $\gamma_{h}^1$ in $K_1$ for $r_1=\lambda_1 = 0$ (b). The figures also show the special ODE solution $\bar \gamma_0$ connecting $\bar p_{r}(\bar h)$ and $\bar p_{a}(\bar h)$ (a), and $ \gamma_{0,1}$ connecting $ p_{r,1}(h_1^*)$ and $ p_{a,1}(h_1^*)$ for fixed $h_1^* > 0$ (b) respectively. In Figure (a), the fixed points $\bar q^{\textnormal{in}}(\bar h)$ and $\bar q^{\textnormal{out}}(\bar h)$, for $\bar \epsilon = 0$, are added, whose existence can be seen in an extra chart (similarly to \cite{ks2011}). In Figure (b), the trajectory $\gamma_{h}^1$ is shown on the attracting center manifold $N_{a,1} \subset \widehat M_{a,1}$ and on the repelling center manifold $N_{r,1} \subset \widehat M_{r,1}$ (see Section~\ref{sec:KahanK1} and Lemma~\ref{specialcont}).}
\label{fig:blownup}
\end{figure}

\subsection{Melnikov computation along the invariant curve} \label{sec:Melnikovnew}

We consider a Melnikov-type computation for the distance between invariant manifolds, which is a discrete time analogue of continuous time results in \cite{ks2001/2} and, for a more general framework, in \cite{Wechselberger2002}. 

Consider an invertible map depending on a parameter $\mu$:
\begin{align} \label{genperturbed}
\begin{array}{r@{\;\,=\;\,}l}
\tilde x & F_1(x,y) +  \mu G_1(x,y,\mu),\\
\tilde y & F_2(x,y) + \mu G_2(x,y,\mu),\\
\tilde \mu & \mu,
\end{array}
\end{align}
where $(x,y) \in \mathbb{R}^2$, and $F=(F_1,F_2)^{\top}$ , $G=(G_1,G_2)^{\top}$ are $C^k$, vector-valued maps, $k\geq 1$. The following theory can be easily extended to  $\mu \in \mathbb{R}^m$, like in~\cite{Wechselberger2002}, but for reasons of clarity we formulate it for $\mu \in \mathbb{R}$.

We formulate the following assumptions:
\begin{enumerate}
\item[(A1)] There exist invariant center manifolds $M_\pm$ of the dynamical system \eqref{genperturbed}, given as graphs of $C^k$-functions $y = g_\pm(x, \mu)$ and intersecting at $\mu = 0$ along the smooth curve 
$$
S=\{(x,y)\in\mathbb{R}^2:y=g(x,0)\},
$$ 
where $g_\pm(x,0)=g(x,0)$.   
\item[(A2)]  Orbits of the map \eqref{genperturbed} with $\mu=0$ passing through a point $(x_0,g(x_0,0))$ on the invariant curve are given by a one-parameter family of solutions $(\gamma_{x_0}(n),0)^\top$ of dynamical system~\eqref{genperturbed} with $\mu=0$, such that $\gamma_{x_0}(n)$ and $G(\gamma_{x_0}(n),0)$ are of a moderate growth when $n\to \pm \infty$ (to be specified later). 
\item[(A3)] There exist solutions $\phi_\pm(n)=(w_{\pm}(n),1)^\top$ of the linearization of~\eqref{genperturbed} along $(\gamma_{x_0}(n),0)^\top$,
\begin{equation} \label{linearizationgamma}
\phi(n+1) = \begin{pmatrix}
\rmD F(\gamma_{x_0}(n)) & G(\gamma_{x_0}(n),0) \\
0 & 1
\end{pmatrix}
\phi(n),
\end{equation}
such that
\begin{equation*}
T_{(\gamma_{x_0}(n),0)^\top} M_\pm = \spn \left\{ \begin{pmatrix}\partial_{x_0} \gamma_{x_0}(n) \\0\end{pmatrix}, \begin{pmatrix} w_\pm(n)\\1\end{pmatrix} \right\},
\end{equation*}
and $w_\pm(n)$ are of a moderate growth (to be specified later) when $n \to \pm \infty$, respectively.
\item[(A4)] The solutions $\psi_{x_0}(n)$ of the \emph{adjoint difference equation}
\begin{equation} \label{adjoint}
\psi(n+1) = \left(\rmD F(\gamma_{x_0}(n))^{\top}\right)^{-1} \psi(n)
\end{equation}
with initial vectors $\psi_{x_0}$ satisfying $\langle\psi_{x_0}(0),\partial_{x_0}\gamma_{x_0}(0)\rangle = 0$, rapidly decay at $\pm \infty$ (the rate of decay to be specified later).
\end{enumerate}
For a given $x_0$, we define $\psi_{x_0}(0)$ to be a unit vector in $\mathbb R^2$ orthogonal to $\partial_{x_0}\gamma_{x_0}(0)$, and set
$$ 
\Sigma = \{(x,y, \mu): (x,y) \in \spn \{\psi_{x_0}(0)\},\; \mu\in \mathbb R \};
$$
the intersections $M_{\pm} \cap \Sigma$ are then given by $(\Delta_{\pm}(\mu) \psi_{x_0}(0), \mu)$, where $\Delta_{\pm}$ are $C^k$-functions.
\medskip

The following proposition is a discrete time analogue of \cite[Proposition 3.1]{ks2001/2}. 
\begin{proposition} \label{formulas}
The first order separation between $M_+$ and $M_-$ at the section $\Sigma$ is given by
\begin{equation} \label{dmu2}
d_{\mu} = -\sum_{n=-\infty}^{\infty} \langle \psi_{x_0}(n+1), G(\gamma_{x_0}(n),0) \rangle.
\end{equation}
\end{proposition}

\begin{proof}
Equations~\eqref{linearizationgamma} and~\eqref{adjoint} read:
\begin{align*}
\psi_{x_0}(n+1) &= \left(\rmD F(\gamma_{x_0}(n))^{\top}\right)^{-1} \psi_{x_0}(n),\\
w_+(n+1) &= \rmD F(\gamma_{x_0}(n)) w_+(n) + G(\gamma_{x_0}(n) ,0), \\
w_-(n+1) &= \rmD F(\gamma_{x_0}(n)) w_-(n) + G(\gamma_{x_0}(n) ,0).
\end{align*}
There follows:
\begin{align*}
& \langle \psi_{x_0}(n+1), w_\pm(n+1) \rangle - \langle \psi_{x_0}(n), w_\pm(n) \rangle \\ 
&\qquad =  \left\langle \left(\rmD F(\gamma_{x_0}(n))^{-1}\right)^{\top} \psi_{x_0}(n), \rmD F(\gamma_{x_0}(n)) w_\pm(n) + G(\gamma_{x_0}(n) ,0) \right\rangle 
- \langle \psi_{x_0}(n), w_\pm(n) \rangle  \\
&\qquad= \langle \psi_{x_0}(n+1), G(\gamma_{x_0}(n),0) \rangle\,.
\end{align*}
Choose initial data $w_\pm(0)=\frac{\rmd \Delta_\pm}{\rmd \mu}(0)\psi_{x_0}(0)$. Assuming that the growth of $w_\pm(n)$ and the decay of $\psi_{x_0}(n)$ at $n\to\pm\infty$, mentioned in (A3) and (A4), are such that 
$$
\lim_{n\to -\infty} \langle \psi_{x_0}(n), w_-(n) \rangle=0, \quad \lim_{n\to +\infty} \langle \psi_{x_0}(n), w_+(n) \rangle=0,
$$
we derive:
$$ 
\frac{\rmd \Delta_-}{\rmd \mu} (0) = \langle \psi_{x_0}(0), w_-(0)\rangle
=  \sum_{n=-\infty}^{-1} \langle \psi_{x_0}(n+1), G(\gamma_{x_0}(n),0) \rangle , 
$$
and
$$ 
\frac{\rmd \Delta_+}{\rmd \mu}  (0) =  \langle \psi_{x_0}(0), w_+(0)\rangle
=  - \sum_{n=0}^{\infty} \langle \psi_{x_0}(n+1), G(\gamma_{x_0}(n),0) \rangle.
$$
From this formula~\eqref{dmu2} follows immediately. 
\end{proof}

We now apply Proposition~\ref{formulas} (or, better to say, its generalization for the case of two parameters $\mu=(r_2,\lambda_2)$) to the Kahan map~\eqref{Kahan_expan} in the rescaling chart $K_2$. First of all, we have to justify Assumptions (A1)--(A4) for this case. Assumption (A1) follows from the fact that for $\mu=(r,\lambda)=0$, the center manifolds $\widehat M_{a,2}$ and $\widehat M_{r,2}$ intersect along the curve $S_h$ given in \eqref{Kahan_invariant}. Assumption (A2) follows from the explicit formula \eqref{Kahan_sol_x0} for the solution $\gamma_{h,x_0}$, as well as 
from formulas \eqref{J} for the functions $\hat J$ and similar formulas for the functions $\hat G$. Assumption (A3) follows from the existence of the center manifolds away from $\mu=(r,\lambda)=0$, established in Proposition \ref{centermanifolds}. Turning to the assumption (A4), we have the following results.
\begin{proposition}\label{prop decaying solution}
For problem~\eqref{Kahan_expan}, the adjoint linear system \eqref{adjoint},
\begin{equation} \label{adjointmap_P0}
\psi(n+1) = \left(\rmD F(\gamma_{h,x_0}(n), h)^{\top}\right)^{-1} \psi(n),
\end{equation}
has the decaying solution
\begin{equation}\label{sol_decay_kahan}
\psi_{h, x_0}(n) = \frac{1}{X(n)} \begin{pmatrix}
-2x_0 - hn \\ 1
\end{pmatrix}, \ n \in \mathbb{Z},
\end{equation} 
where 
\begin{equation}\label{eq X}
X(n) = \prod_{k=0}^{n-1} a(k), \quad
X(-n) = \prod_{k=1}^n (a(-k))^{-1} \;\;{\rm for}\;\; n>0,
\end{equation}
and
\begin{equation}\label{eq fraction a}
a(k) = \frac{1 + h\left(x_0 +  \frac{h}{2}(k+1)\right) + \frac{h^2}{4}}{1 - h\left(x_0 +  \frac{h}{2}k\right) + \frac{h^2}{4}}.
\end{equation}
We have:
\begin{equation} \label{detXn}
\left|X(n)\right| \approx |n|^{4/h^2 + 2}, \ \text{ as } n \to \pm \infty.
\end{equation} 
Here the symbol $\approx$ relates quantities whose quotient has a limit as $n\to\pm\infty$.
\end{proposition}
\begin{proof}

Fix $x_0 \in \mathbb{R}$ and set 
$$ 
A(n)= \rmD F(\gamma_{h, x_0}(n), h).
$$
Let
$$ \Phi(n) = \begin{pmatrix}
\phi_{1,1}(n) & \phi_{1,2}(n) \\
\phi_{2,1}(n) & \phi_{2,2}(n)
\end{pmatrix}
$$
be a fundamental matrix solution of the linear difference equation
$$ 
\phi(n+1) = A(n) \phi(n)
$$
with $\det \Phi(0)=1$. The first column of the fundamental matrix solution $\Phi(n)$ can be found as $\partial_{x_0} \gamma_{h,x_0}$. Using formula~\eqref{Kahan_sol_x0} for $\gamma_{h, x_0}$, we have:
$$ \begin{pmatrix}
\phi_{1,1}(n)  \\
\phi_{2,1}(n) 
\end{pmatrix} = \begin{pmatrix}
1  \\
2x_0 + hn 
\end{pmatrix}.$$

A fundamental solution of the adjoint difference equation 
$$ 
\psi(n+1) =( A^\top(n))^{-1} \psi(n)
$$
is given by
$$ 
\Psi(n) = (\Phi^{\top}(n))^{-1} = \frac{1}{\det \Phi(n)}  \begin{pmatrix}
\phi_{2,2}(n) & -\phi_{2,1}(n) \\
-\phi_{1,2}(n) & \phi_{1,1}(n)
\end{pmatrix}.
$$
Its second column is a solution of the adjoint system as given in \eqref{sol_decay_kahan}, with $X(n)=\det\Phi(n)$. To compute $X(n)$, we observe that from
\begin{eqnarray*}
\Phi(n) & = & A(n-1)A(n-2)\ldots A(0)\Phi(0) \quad {\rm for} \quad n>0,\\
\Phi(0) & = & A(-1)A(-2)\ldots A(-n)\Phi(-n) \quad {\rm for} \quad n>0,
\end{eqnarray*}
and from $\det\Phi(0)=1$, there follows a discrete analogue of Liouville's formula: for $n>0$,
\begin{equation*}
\det \Phi(n) = \prod_{k=0}^{n-1} \det A(k), \quad
\det \Phi(-n) = \prod_{k=1}^{n}(\det A(-k))^{-1},
\end{equation*}
which coincides with \eqref{eq X} with $a(k)=\det A(k)$. Expression \eqref{eq fraction a} for these quantities follows from \eqref{det J prelim}.

To prove the estimate \eqref{detXn}, we observe:
$$ 
a(k) = - \frac{k + \beta}{k - \alpha}\quad{\rm with} \quad 
\alpha = \frac{2}{h^2} \left( 1 - hx_0 + \frac{h^2}{4}\right), \quad \beta = \frac{2}{h^2} \left( 1 + hx_0 + \frac{3 h^2}{4} \right).
$$
Therefore, for $n>0$,
\begin{eqnarray*} 
X(n) & = & (-1)^n\prod_{k=0}^{n-1} \frac{k+\beta}{k-\alpha} = (-1)^n\ \frac{\Gamma(n+\beta)}{\Gamma(n-\alpha)}\ \frac{\Gamma(-\alpha)}{\Gamma(\beta)},\\
X(-n) & =  & (-1)^n\prod_{k=1}^{n} \frac{k+\alpha}{k-\beta} =(-1)^n\ \frac{\Gamma(n+\alpha)}{\Gamma(n-\beta)}\ \frac{\Gamma(-\beta)}{\Gamma(\alpha)}.
\end{eqnarray*}
Using the formula $\Gamma(n+c)\sim n^c \Gamma(n)$ by $n\to+\infty$ (in the sense that the quotient of the both expressions tends to 1), 
we obtain for $n\to+\infty$:
\begin{equation}\label{det Phi} 
|X(n)|, \, |X(-n)| \approx  n^{\alpha+\beta}= n^{4/h^2 +2}.
\end{equation}
This completes the proof.
\end{proof}

With the help of estimates of Proposition \ref{prop decaying solution}, we derive from Proposition \ref{formulas} the following statement:

\begin{proposition}\label{prop:convergence}
For the separation of the center manifolds $\widehat M_{a,2}$ and $\widehat M_{r,2}$, and for sufficiently small $h$, we have the first order expansion
\begin{equation} \label{distancefirstorder}
D_{h,x_0}(r, \lambda) = d_{h,x_0,\lambda} \lambda + d_{h,x_0,r}r + \mathcal{O}(2),
\end{equation}
where $\mathcal{O}(2)$ denotes terms of order $\ge 2$ with respect to $\lambda,r$, and
\begin{equation}\label{eq:dlambda}
d_{h,x_0,\lambda}=-\sum_{n=-\infty}^{\infty} \langle \psi_{h,x_0}(n+1), \hat J(\gamma_{h,x_0}(n),h) \rangle, 
\end{equation}
\begin{equation} \label{eq:dr}
d_{h,x_0,r}=-\sum_{n=-\infty}^{\infty} \langle \psi_{h,x_0}(n+1), \hat G(\gamma_{h,x_0}(n),h) \rangle.
\end{equation}
In particular, convergence of the series in equation~\eqref{eq:dlambda} is obtained for any $h>0$ and convergence of the series in equation~\eqref{eq:dr} is obtained for $0 < h < \sqrt{4/3}$.
\end{proposition}
\begin{proof}
The form of the first order separation follows from Proposition \ref{formulas}. Furthermore, recall from equation~\eqref{J} that 
$$ \hat J(\gamma_h(n),h) = \left(  \frac{h^2}{2} \frac{1}{1-\frac{h^2 n}{2}+\frac{h^2}{4}}, - h \frac{1-\frac{h^2 n}{2}}{1-\frac{h^2 n}{2} + \frac{h^2}{4}}\right) \xrightarrow{n \to \pm \infty} (0, -h).$$
Using Proposition \ref{prop decaying solution}, this yields~\eqref{eq:dlambda} for any $h > 0$.
Note from equation~\eqref{ODE_for_Kahan} that the highest order $n^{\kappa}$ we can obtain in the terms $ \hat G(\gamma_{h}(n),h)$ is $\kappa = 3$ (coming from the term with factor $a_2$) such that for large $\left|n\right|$ we have
$$\langle \psi_{h}(n+1), \hat G(\gamma_{h}(n),h) \rangle = \mathcal{O}\left(n^{-4/h^2-2}n n^3 \right) = \mathcal{O}\left(n^{-4/h^2+2}\right).$$
This means that the convergence in~\eqref{eq:dr} is given for $-4/h^2+2 < -1$ such that the claim follows.
\end{proof}

We are now prepared to show our main result.
\begin{theorem} \label{canard_discrete}
Consider the Kahan discretization for system~\eqref{ODE_for_Kahan}. Then there exist $\epsilon_0, h_0 > 0$ and a smooth function $\lambda_c^h(\sqrt{\epsilon})$ defined on $[0, \epsilon_0]$ such that for $\epsilon \in [0, \epsilon_0]$ and $h \in (0, h_0]$ the following holds:
\begin{enumerate}
\item The attracting slow manifold $S_{a, \epsilon,h}$ and the repelling slow manifold $S_{r, \epsilon,h}$ intersect, i.e. exhibit a maximal canard, if and only if $\lambda = \lambda_c^h(\sqrt{\epsilon})$.
\item 
The function $\lambda_c^h$ has the expansion
$$ \lambda_c^h(\sqrt{\epsilon})= - C \epsilon + \mathcal{O}( \epsilon^{3/2}h),$$ 
where $C$ is given as in~\eqref{constant} (for $a_3 =0$).
\end{enumerate}
\end{theorem}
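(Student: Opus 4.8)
The plan is to transport the slow manifolds of $P_K$ through the blow-up of Section~\ref{sec:blowup}, to realise in the rescaling chart $K_2$ the unperturbed connection $\gamma_h$ from Proposition~\ref{prop:Kahan_invariant} and Lemma~\ref{specialcont}, and then to measure the splitting of the slow manifolds under variation of $(\epsilon,\lambda)$ by the discrete Melnikov machinery of Propositions~\ref{formulas} and~\ref{distanceKahan}. Concretely: Proposition~\ref{centermanifolds} gives in chart $K_1$ a center-stable manifold $\hat M_{\txta,1}$ and a center-unstable manifold $\hat M_{\txtr,1}$, with branches for $\epsilon_1,h_1>0$ that are unique and carry the forward and backward slow manifolds $S_{\txta,\epsilon,h}$, $S_{\txtr,\epsilon,h}$. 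Passing to $K_2$ via the algebraic change $\kappa_{12}$ yields $\hat M_{\txta,2}=\kappa_{12}(\hat M_{\txta,1})$ and $\hat M_{\txtr,2}=\kappa_{12}(\hat M_{\txtr,1})$, which by Lemma~\ref{specialcont} intersect along the trajectory $\gamma_h$~\eqref{Kahan_specsol} when $r=\lambda=0$. Writing $r=r_2=\sqrt{\epsilon}$ and $\lambda_2=\lambda/\sqrt{\epsilon}$ (so $h_2=h\sqrt{\epsilon}$), a maximal canard of $P_K$ at $(\epsilon,\lambda)$ is equivalent to $\hat M_{\txta,2}\cap\hat M_{\txtr,2}\neq\emptyset$, hence to a zero of a distance function $D(r,\lambda_2)$; all maps involved are invertible, so this is meaningful for forward and backward iteration.

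I would then put the perturbation~\eqref{Pperturbed} of $P^0$ into the abstract form~\eqref{genperturbed} with parameter $\mu=\lambda_2$ (at $r=0$) and $\mu=r$ (at $\lambda=0$) and invoke Proposition~\ref{distanceKahan}. Assumptions (A1) and (A2) follow from the center-manifold structure of $\hat M_{\txta,2},\hat M_{\txtr,2}$ and the algebraicity of $\kappa_{12}$. The substantive point is (A3): one needs a trajectory $\psi(n)$ of the adjoint map~\eqref{adjointmap} along $\gamma_h$ that decays exponentially at $\pm\infty$. I would build $\psi$ from the conormal of the invariant parabola $S_h$~\eqref{Kahan_invariant}: differentiating the area-distortion identity $\det\rmD_{x,y}P^0=\varphi_h\circ P^0/\varphi_h$ on $S_h$ shows that $n\mapsto c_n\,\nabla\varphi_h(\gamma_h(n))$ solves~\eqref{adjointmap} once $c_{n+1}=c_n/\det\rmD P^0(\gamma_h(n),h)$, and one checks (using $\det\rmD P^0=1+h\,\trace\rmD f_0+\mathcal{O}(h^2)$ along $\gamma_h$, where $f_0$ is the vector field of~\eqref{K2_easy}) that the accumulated factors force super-exponential decay, mirroring $\grad H|_{S_h}=-\tfrac12 e^{-2y}\nabla\varphi_h|_{S_h}$ and $e^{-2y(t)}=e\,e^{-t^2/2}$ along $\gamma_{\txtc,2}$ in continuous time. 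Alternatively (A3) can be bypassed by working with the exact finite-$N$ identity~\eqref{dmu1}, taking $N=N(h)$ just short of where the denominator $1-hx+\frac{h^2}{4}$ changes sign along $\gamma_h$, where the boundary terms are already exponentially small. Either way one obtains
\[ D(r,\lambda_2)=d_r(h)\,r+d_\lambda(h)\,\lambda_2+\mathcal{O}(2),\qquad d_\lambda(h)=\sum_{n}\langle\psi(n+1),\hat J(\gamma_h(n),h)\rangle,\qquad d_r(h)=\sum_{n}\langle\psi(n+1),\hat G(\gamma_h(n),h)\rangle. \]

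The core estimate is $d_\lambda(h)\neq0$ together with $d_r(h)/d_\lambda(h)\to C$ as $h\to0$. By consistency of the Kahan method, $\hat J(x,h)=h\,(0,-1)^{\top}+\mathcal{O}(h^2)$ (read off from~\eqref{J}) and $\hat G(x,y,h)=h\,G(x,y)+\mathcal{O}(h^2)$ with $G$ as in~\eqref{Gperturbation}; moreover $\gamma_h(n)=\gamma_{\txtc,2}(nh)+\mathcal{O}(h^2)$ by comparing~\eqref{Kahan_specsol} with~\eqref{specialsolution}, and the normalized $\psi(n)$ converges to the normalized continuous adjoint solution $\psi_{\mathrm c}(\cdot):=\grad H(\gamma_{\txtc,2}(\cdot))/\|\grad H(\gamma_{\txtc,2}(0))\|$, which solves the continuous adjoint ODE because $H$ is a first integral. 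Rewriting $d_\lambda(h)=h\sum_n\langle\psi(n+1),\hat J(\gamma_h(n),h)/h\rangle$ as a Riemann sum of step $h$ with super-exponentially summable integrand, one passes to the limit to get $d_\lambda(h)\to\kappa_0\int_{-\infty}^{\infty}\langle\grad H(\gamma_{\txtc,2}(t)),(0,-1)^{\top}\rangle\,\rmd t=\kappa_0\,d_\lambda$ and likewise $d_r(h)\to\kappa_0\,d_r$, with $d_\lambda,d_r$ the continuous Melnikov integrals~\eqref{dlambda2},~\eqref{dr2} and $\kappa_0>0$ the common normalization constant. Since $d_\lambda\neq0$ by~\cite{ks2011}, this gives $d_\lambda(h)\neq0$ for $h$ small and $d_r(h)/d_\lambda(h)=d_r/d_\lambda+\mathcal{O}(h)=C+\mathcal{O}(h)$, using $C=d_r/d_\lambda$ from Theorem~\ref{canard_classic} and~\eqref{DCexpansion}.

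Finally, since $\partial_{\lambda_2}D(r,\lambda_2)=d_\lambda(h)+\mathcal{O}(|r|+|\lambda_2|)\neq0$ for $r,\lambda_2$ small, the implicit function theorem yields a unique smooth $\lambda_2=\lambda_2^{\ast}(r)=-\tfrac{d_r(h)}{d_\lambda(h)}\,r+\mathcal{O}(r^2)$ with $D(r,\lambda_2^{\ast}(r))=0$. Undoing $r=\sqrt{\epsilon}$, $\lambda=r\lambda_2$, a maximal canard occurs if and only if $\lambda=\lambda_c^h(\sqrt{\epsilon}):=\sqrt{\epsilon}\,\lambda_2^{\ast}(\sqrt{\epsilon})=-\tfrac{d_r(h)}{d_\lambda(h)}\,\epsilon+\mathcal{O}(\epsilon^{3/2})$, which is smooth in $\sqrt{\epsilon}$ on $[0,\epsilon_0]$ and vanishes at $\epsilon=0$, consistently with Remark~\ref{rem:kahan_Sepsi}; this is assertion~(1). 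For assertion~(2), $\lambda_c^h(\sqrt{\epsilon})=-C\epsilon-\bigl(\tfrac{d_r(h)}{d_\lambda(h)}-C\bigr)\epsilon+\mathcal{O}(\epsilon^{3/2})$, so given $\delta>0$ one chooses $h_0$ with $|\tfrac{d_r(h)}{d_\lambda(h)}-C|<\delta/2$ for $h\le h_0$ and then $\epsilon_0$ with the remaining term below $(\delta/2)\epsilon$, which is the stated expansion. Transporting the intersection $\hat M_{\txta,2}\cap\hat M_{\txtr,2}$ back through $\kappa_{21}$ and the blow-down $\Phi$ — using the uniqueness in Proposition~\ref{centermanifolds} and the matching in Lemma~\ref{specialcont} — identifies it with the connection of $S_{\txta,\epsilon,h}$ to $S_{\txtr,\epsilon,h}$ in the original coordinates, finishing the proof. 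The main obstacle is the construction of the exponentially decaying adjoint solution $\psi$ (verification of (A3)) and the uniform control needed to pass to the $h\to0$ limit and conclude $d_\lambda(h)\neq0$; once these are secured the remainder is the implicit function theorem and bookkeeping between the charts.
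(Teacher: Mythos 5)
Your proposal is correct and follows the same overall architecture as the paper's proof: transport of the slow manifolds through the blow-up, identification of the unperturbed connection $\gamma_h$ in $K_2$, a discrete Melnikov sum via Propositions~\ref{formulas} and~\ref{distanceKahan}, comparison of $d_\lambda(h)$, $d_r(h)$ with the explicit continuous-time integrals $-\sqrt{2\pi}$ and $-C\sqrt{2\pi}$, and the implicit function theorem followed by blow-down. The one place where you genuinely diverge is the treatment of assumption (A3). The paper does not construct the decaying adjoint solution directly: it approximates the discrete adjoint map by the continuous adjoint ODE along $\gamma_{\txtc,2}$ (whose decaying solution is explicit), obtains $\psi(nh)=\psi_h(n)+\mathcal{O}(h)$ only on compact time intervals, and then works with the exact finite-$N$ identity~\eqref{dmu1}, choosing $N$ so that the tail integrals and the boundary terms $\langle\psi(\pm N),w_\pm(\pm N)\rangle$ contribute less than a prescribed $\delta$; this yields $|{-\sqrt{2\pi}}-d_\lambda|=\mathcal{O}(\delta)+\mathcal{O}(h)$ and likewise for $d_r$. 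Your alternative — taking $\psi(n)=c_n\nabla\varphi_h(\gamma_h(n))$ with $c_{n+1}=c_n/\det\rmD P^0(\gamma_h(n),h)$, which is indeed a solution of the adjoint map by differentiating the invariance of $S_h$ and using $\det\rmD P^0=\varphi_h\circ P^0/\varphi_h$, with decay $c_n\sim e^{-(nh)^2/2}$ from $\det\rmD P^0=1+h^2n+\mathcal{O}(h^2)$ along $\gamma_h$ — is an explicit verification of (A3) that the paper omits, and it mirrors the continuous identity $\grad H|_{\gamma_{\txtc,2}}\propto e^{-t^2/2}(-t,1)^{\top}$ exactly. This buys you the clean infinite-sum formula~\eqref{dmu2} without boundary terms, at the cost of having to justify uniform convergence of the resulting Riemann sums over the whole line (the super-exponential weight makes this unproblematic, but it should be said); the paper's finite-$N$ bookkeeping with explicit $\delta$- and $h$-dependent constants achieves the same estimate more conservatively. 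Everything else — the consistency estimates $\hat J=hJ+\mathcal{O}(h^2)$, $\hat G=hG+\mathcal{O}(h^2)$, $\gamma_h(n)=\gamma_{\txtc,2}(nh)+\mathcal{O}(h^2)$, the ratio $d_r(h)/d_\lambda(h)\to C$, and the final $\delta$-dependent choice of $h_0,\epsilon_0$ for assertion (2) — matches the paper.
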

\begin{proof}
First, we will work in chart $K_2$ and show that the quantities $d_{h,x_0,\lambda}$, $d_{h,x_0,r}$ in~\eqref{eq:dlambda}, \eqref{eq:dr} with $x_0=0$ approximate the quantities  $d_{\lambda}$, $d_{r}$ in \eqref{dr2}, \eqref{dlambda2}(up to change of sign). We prove:
\begin{align}
\sum_{n=-\infty}^{\infty} \langle \psi_{h,0}(n+1), \hat G(\gamma_{h,0}(n),h) \rangle &= \int_{-\infty}^{\infty} \big\langle \psi(t_2), G(\gamma_{0,2}(t_2)) \big\rangle \, \rmd t_2+\mathcal O(h), \label{sum_integral_G} \\
 \sum_{n=-\infty}^{\infty} \langle \psi_{h,0}(n+1), \hat J(\gamma_{h,0}(n),h) \rangle&= \int_{-\infty}^{\infty} \big\langle \psi(t_2),\begin{pmatrix}  0 \\ -1 \end{pmatrix} \big\rangle \, \rmd t_2+\mathcal O(h), \label{sum_integral_J}
\end{align} 
where, recall,
\begin{equation}\label{sols_decay}
\psi_{h, 0}(n) = \frac{1}{X(n)} \begin{pmatrix}  - hn \\ 1 \end{pmatrix}, \quad
 \psi(t_2)=\frac{1}{{\rm e}^{t_2^2/2}}  \begin{pmatrix} -t_2 \\ 1 \end{pmatrix},
\end{equation} 
\begin{equation}\label{sols_x0}
\renewcommand{\arraystretch}{2.2}
\gamma_{h, 0}(n) = \begin{pmatrix} \dfrac{hn}{2} \\ \dfrac{(hn)^2}{4} - \dfrac{1}{2} - \dfrac{h^2}{8} \end{pmatrix}, \quad 
\gamma_{0,2}(t_2) = \begin{pmatrix} \dfrac{t_2}{2} \\ \dfrac{t_2^2}{4} - \dfrac{1}{2}\end{pmatrix},
\end{equation}
the function $\hat J$ is defined as in \eqref{J}, and similar formulas hold true also for the function $\hat G$. Further recall that the Melnikov integrals can be solved explicitly, yielding
\begin{align*}
\int_{-\infty}^{\infty} \langle \psi(t_2), J(\gamma_{0,2}(t_2) \rangle \, \rmd t_2 &= -\int_{-\infty}^{\infty} e^{-t_2^2/2} \, \rmd t_2 =  - \sqrt{2\pi}\,,\\
\int_{-\infty}^{\infty} \langle \psi(t_2), G(\gamma_{0,2}(t) \rangle \,  \rmd t &= \frac{1}{8}\int_{-\infty}^{\infty} (-4 a_5 -(4 a_1 + 2a_2 -2a_4-2a_5 )t_2^2 +a_2 t_2^4) e^{-t_2^2/2} \, \rmd t_2 \\
&=  -C\sqrt{2\pi}\,,
\end{align*}
where $a_i$ and $C$ are as introduced in Section~\ref{sec:contmain} (for $a_3=0$, see~\eqref{ODE_for_Kahan_slow} and \eqref{ODE_for_Kahan}).


We show~\eqref{sum_integral_G} --- the simpler case~\eqref{sum_integral_J} then follows similarly.
We observe:
\begin{enumerate}
\item The remainder of the integral satisfies
$$S(t) := \left(\int_{-\infty}^{-T}+\int_T^{\infty}\right)\langle \psi(t_2),G(\gamma_{0,2}(t_2)\rangle \, \rmd t_2=\mathcal O(T^Me^{-T^2/2}),$$
for $T >0$ and some $M \in \mathbb{N}$.
Hence, we can keep $S(T) = \mathcal O(h^{2-c})$ for any $c>0$ with the choice $T\geq (4 \ln \frac{1}{h})^{1/2}$.

\item For $N=T/h$, we turn to estimate
$$
\hat S(N) := \left(\sum_{n=-\infty}^{-N}+\sum_{n=N}^{\infty}\right) \langle \psi_{h,0}(n+1), \hat G(\gamma_{h,0}(n),h) \rangle .
$$
We denote by $n^*$ the closest integer to $\alpha= 2/h^2 + 1/2$, and recall that $\beta= 2/h^2 + 3/2$. Since 
$$\left|\frac{n^* + \beta}{n^*-\alpha} \right| \geq n^* + \beta \geq 4/h^2,$$
we can write, for all $n \geq 2/h^2 + 3/2$, 
$$ \left| X(n+1) \right| \geq \frac{4}{h^2} \prod_{k=0, k\neq n^*}^n \left| \frac{k+ \beta}{k-\alpha} \right|.$$
Since, with Proposition~\ref{prop decaying solution} the summands of $\hat S(N)$ converge to zero even faster for smaller $h$, we obtain by choosing $N\geq \left \lceil{2/h^2 + 3/2}\right \rceil $, and hence $T \geq 2/h + 5h/2 $, that
$$\left(\sum_{n=-\infty}^{-N}+\sum_{n=N}^{\infty}\right) \langle \psi_{h,0}(n+1), \hat G(\gamma_{h,0}(n),h) \rangle =  \mathcal O(h^2).$$
\item For $T=3/h$, we get by the standard methods the estimate 
$$
\sum_{n=-N}^{N} \langle \psi_{h,0}(n+1), \hat G(\gamma_{h,0}(n),h) \rangle - \int_{-T}^{T} \big\langle \psi(t_2), G(\gamma_{0,2}(t_2)) \big\rangle \, \rmd t_2= \mathcal O(Th^2) = \mathcal O(h). 
$$
\end{enumerate}
Hence, we can conclude that equations~\eqref{sum_integral_G} and~\eqref{sum_integral_J} hold, and, in particular, that $d_{h,0,\lambda}$ and $ d_{h,0,r}$ are bounded away from zero for sufficiently small $h$.
Recall from~\eqref{distancefirstorder} that
\begin{equation*}
D_{h,0}(r, \lambda) = d_{h,0,\lambda} \lambda + d_{h,0,r}r + \mathcal{O}(2), \,,
\end{equation*}
where $D_{h,0}(0,0) = 0$. Hence, the fact that $d_{h,0,\lambda}$ and $ d_{h,r}$ are not zero implies, by the implicit function theorem, that there is a smooth function $ \lambda^{h}(r)$ such that
$$ D_{h,0}(r, \lambda^{h}(r)) = 0$$
in a small neighborhood of $(0,0)$. Transforming back from $K_2$ into original coordinates then proves the first claim.

Furthermore, we obtain
$$  \lambda^{h}(r) = - \frac{d_{h,0,r}}{d_{h,0,\lambda}}r + \mathcal{O}(2) = - C r  + \mathcal{O}(r h)\,.$$
Transformation into original coordinates gives
$$  \lambda_c^{h}(\sqrt{\epsilon}) = - C\epsilon + \mathcal{O}\left(\epsilon^{3/2} h \right)\,.$$
Hence, the second claim follows.
\end{proof}
Numerical computations show that $h_0$ in Theorem~\ref{canard_discrete} does not have to be extremely small but that our results are quite robust for different step sizes.
In Figure~\ref{fig:Melnikov}, we display such computations for the case $a_1 =1$, $a_2=a_4 = a_5 =0$.
In this case, the rescaled Kahan discretization in chart $K_2$ is given by
\begin{align} \label{K2dynamics_Kahan_h3}
\begin{array}{r@{\;\,=\;\,}l}
\tilde x & \dfrac{x - hy + \frac{h}{2} x r - \frac{h^2}{4}  x + \frac{h^2}{2}\lambda}{ 1- hx - \frac{h}{2} r + \frac{h^2}{4}} ,  \\
\tilde y & \dfrac{y - h yx - \frac{h}{2} y r - \frac{h^2}{2}  x^2 - h \lambda + h^2 x \lambda  + h x + \frac{h^2}{2} \lambda r - \frac{h^2}{4}  y}{ 1- hx - \frac{h}{2} r + \frac{h^2}{4}}.
\end{array}
\end{align}
Hence, we obtain
\begin{equation} \label{G}
 \hat G_1(x, y, h) = \frac{h x- \frac{h^2}{2}y - \frac{h^2}{2}x^2}{\left(1 - h x + \frac{h^2}{4}\right)^2}, 
\quad \hat G_2(x, y, h) = \frac{\frac{h^2}{2}x - \frac{h^3}{4}y - \frac{h^3}{4}x^2}{\left(1 - h x + \frac{h^2}{4}\right)^2}.
\end{equation}
For different values of $h$ and $N$ we calculate
$$d_{h,\lambda}(N) :=\sum_{n=-N}^{N-1} \langle \psi_{h}(n+1), \hat J(\gamma_{h}(n),h) \rangle \approx - d_{h, 0, \lambda}\,,$$
and, for the situation of~\eqref{K2dynamics_Kahan_h3} with $\hat G$ as in~\eqref{G},
$$d_{h,r}(N) :=\sum_{n=-N}^{N-1} \langle \psi_{h}(n+1), \hat G(\gamma_{h}(n),h) \rangle \approx -d_{h,0,r}\,,$$
We compare these quantities with the values of the respective continuous-time integrals $d_{\lambda} = - \sqrt{2 \pi}$ and $d_{r} = - \sqrt{2 \pi}/2$ (we have $C=1/2$ in this case).

We observe in Figure~\ref{fig:Melnikov} that the sums converge very fast for relatively small $hN$ in both cases. Additionally, we see that $\left|d_{h,\lambda}(N) - d_{\lambda}  \right|$ is significantly smaller than $\left|d_{h,r}(N) - d_{r}  \right|$ for the same values of $h$. Note that the computations indicate that Theorem~\ref{canard_discrete} holds for the chosen values of $h$ since $d_{h,0,\lambda}  \approx  \sqrt{2 \pi} + \left(d_{\lambda} - d_{h,\lambda}(N) \right)$ is clearly distant from $0$.

\begin{figure}[H]
\centering
\begin{subfigure}{.45\textwidth}
  \centering
  \begin{overpic}[width=1\linewidth]{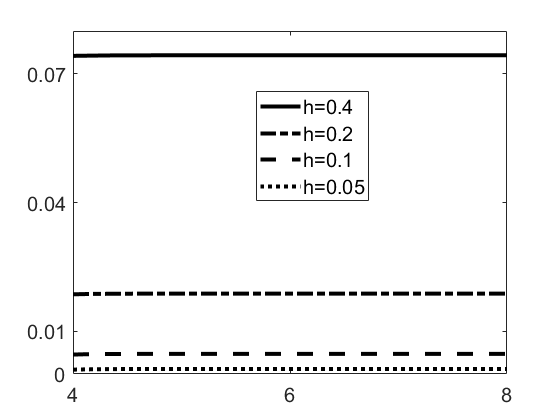}
    \put(54,0){\scriptsize $Nh$}
        \end{overpic}
  \caption{$\left|d_{h, \lambda}(N) - d_{\lambda}  \right|$}
  \label{Melnikov:lambda}
\end{subfigure}%
\begin{subfigure}{.45\textwidth}
  \centering
  \begin{overpic}[width=1\linewidth]{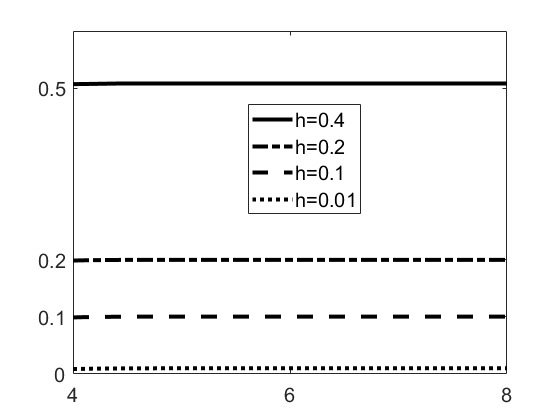}
      \put(54,0){\scriptsize $Nh$}
        \end{overpic}
  \caption{$\left|d_{h,r}(N) - d_{r}  \right|$}
  \label{Melnikov:r}
\end{subfigure}
\caption{ The integral errors (a) $\left|d_{h,\lambda}(N) - d_{\lambda}  \right|$ and (b) $\left|d_{h, r}(N) - d_{r}  \right|$ for different values of $h$ and $N \in \mathbb{N}$.}
\label{fig:Melnikov}
\end{figure}
\subsection{Numerical illustrations for $\epsilon > 0$} \label{sec:numerics}
We illustrate the results by some additional numerics for $\epsilon > 0$, supplemenenting the illustrations of the dynamics in the rescaling chart $K_2$, as given by Figures~\ref{fig:kahan_1} and~\ref{fig:kahan_2}.
Firstly, we consider the simplest case where $a_i =0$ for all $i$, i.e., situation~\eqref{Kahan for slow part} with invariant curve $S_{\epsilon, h}$ \eqref{Kahan invariant curve}. Figure~\ref{fig:kahan_eps} shows different trajectories of the map~\eqref{Kahan for slow part} for $\epsilon =0.1$ and $h=0.02$, illustrating the organization of dynamics around $S_{\epsilon, h}$ analogously to the dynamics of~\eqref{Kahan_model 0} around $S_{h}$ \eqref{Kahan_invariant} (see Figure~\ref{fig:kahan_1}). 

Secondly, we consider the map~\eqref{Kahan for normalform} with $a_1=1$, i.e., a small additional perturbation of the canonical form, similarly to the end of the previous section. We take $\epsilon =0.1$, $h=0.02$ and $\lambda = - (a_1/2) \epsilon$, as a leading order approximation of $\lambda_c^h(\sqrt{\epsilon})$ (see Theorem~\ref{canard_discrete}). In Figure~\ref{fig:Kahan_perturbed}, we observe that the numerics given by the Kahan discretization approximate very well the maximal canard, which slightly deviates from $S_{\epsilon,h}$, again illustrating the organization of dynamics into bounded and unbounded trajectories sperated by the maximal canard. Note that we have chosen $\epsilon =0.1$ to demonstrate the extension up to a relatively large $\epsilon$.

In addition, we consider a model with cubic nonlinearity in order to demonstrate the application of the Kahan method beyond the purely quadratic case. Consider the equation
\begin{align} \label{eq:van_der_Pol}
\begin{array}{r@{\;\,=\;\,}l}
x' & - y  +  x^2 \left(1 + \displaystyle{\frac{x}{3}}\right), \\
y' & \epsilon(x  - \lambda),
\end{array}
\end{align}
as an example of equation~\eqref{normalform},
i.e., $a_3 =1/3$ and $a_i =0, i=1,2,4,5$. Equation~\eqref{eq:van_der_Pol} is the van der Pol equation with constant forcing after transformation around one of the fold points (see \cite[Example 8.1.6]{ku2015}). The Kahan discretization~\eqref{RungeKutta} of this equation yields
\begin{equation}\label{Kahan for van der Pol}
\renewcommand{\arraystretch}{1.9}
\begin{array}{r@{\;\,=\;\,}l}
\dfrac{1}{h}(\tilde x - x) & - \dfrac{1}{2}(y +\tilde y)+x\tilde  x -  \dfrac{x^3+\tilde x^3}{12}(x+\tilde x) + \dfrac{x^2 \tilde x + \tilde x^2 x}{4}, \\ 
\dfrac{1}{h}(\tilde y - y) & \dfrac{\epsilon}{2}(x +\tilde x) -\epsilon \lambda,
\end{array}
\end{equation} 
such that the cubic nonlinearity does not vanish and we do not directly obtain an explicit form. However, we can use~\eqref{Kahan for van der Pol} as a numerical scheme by always taking the unique real solution $\tilde{x}$, closest to $x$ in absolut value, of the cubic polynomial.
\begin{figure}[H]
\centering
 \begin{overpic}[width=0.5\linewidth]{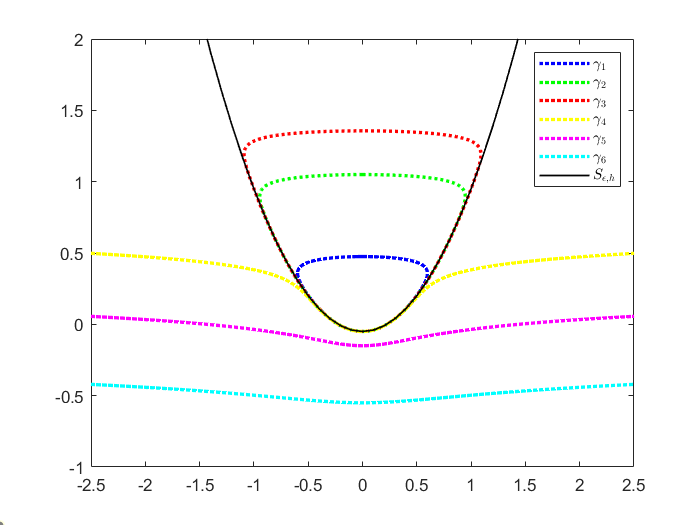}
  \put(54,0){\scriptsize $x$}
    \put(4,40){\scriptsize $y$}
    \end{overpic}
  \caption{Trajectories for the Kahan map~\eqref{Kahan for normalform}, when $a_i =0$ for $i=1,2,4,5$, with $\epsilon =0.1$, $h = 0.02$ and $\lambda =0$, for different initial points: three bounded orbits above the separatrix $S_{\epsilon, h}$, and three unbounded orbits below the separatrix $S_{\epsilon, h}$.}
\label{fig:kahan_eps}
\end{figure}
%
\begin{figure}[H]
\centering
\begin{subfigure}{.45\textwidth}
  \centering
  \begin{overpic}[width=1\linewidth]{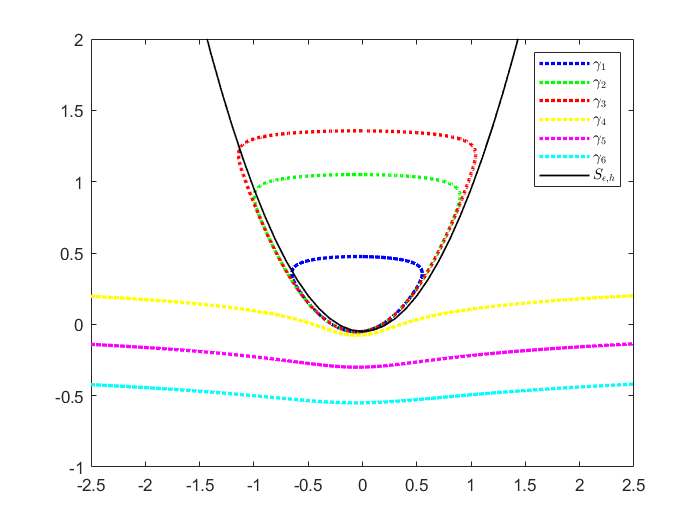}
    \put(54,0){\scriptsize $x$}
    \put(4,40){\scriptsize $y$}
        \end{overpic}
  \caption{}
  \label{fig:Kahan_perturbed_1}
\end{subfigure}%
\begin{subfigure}{.45\textwidth}
  \centering
  \begin{overpic}[width=1\linewidth]{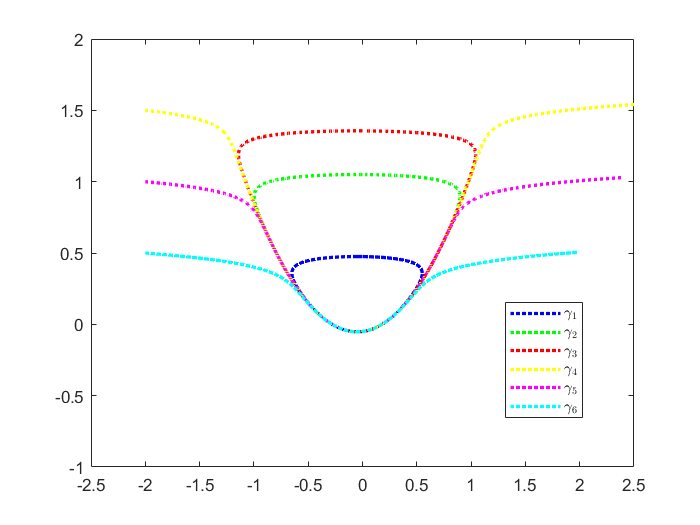}
      \put(54,0){\scriptsize $x$}
      \put(4,40){\scriptsize $y$}
        \end{overpic}
  \caption{}
  \label{fig:Kahan_perturbed_2}
\end{subfigure}
\caption{Trajectories for the Kahan map~\eqref{Kahan for normalform}, when $a_1=1$ and $a_i =0$ for $i=2,4,5$, around maximal canard, taking $\epsilon=0.1$, $h=0.02$ and $\lambda = -(a_1/2)\epsilon$, $a_1=1$: (a) in comparison to symmetric, unperturbed separatrix $S_{\epsilon,h}$, and (b) showing movement along and away from maximal canard.}
\label{fig:Kahan_perturbed}
\end{figure}
In Figure~\ref{fig:kahan_vanderPol}, we illustrate the results of the Kahan discretization~\eqref{Kahan for van der Pol} of the van der Pol equation~\eqref{eq:van_der_Pol}, again for $\epsilon =0.1$ and $h=0.02$, taking $\lambda = - (3 a_3/8) \epsilon$, as a leading order approximation of $\lambda_c(\sqrt{\epsilon})$ (see Theorem~\ref{canard_classic}). Observe that the numerics indicate the existence of a maximal canard, also in this situation, separating bounded, now spiralling, orbits and unbounded orbits. Note that the implementation is based on the fact that the cubic polynomial in $\tilde x$ always has exactly one real solution, which we take as the next value, plus a complex conjugate pair with non-trivial imaginary part. A more general, algebraic analysis extends beyond the scope of this work and is left for additional research.
\begin{figure}[H]
\centering
 \begin{overpic}[width=0.5\linewidth]{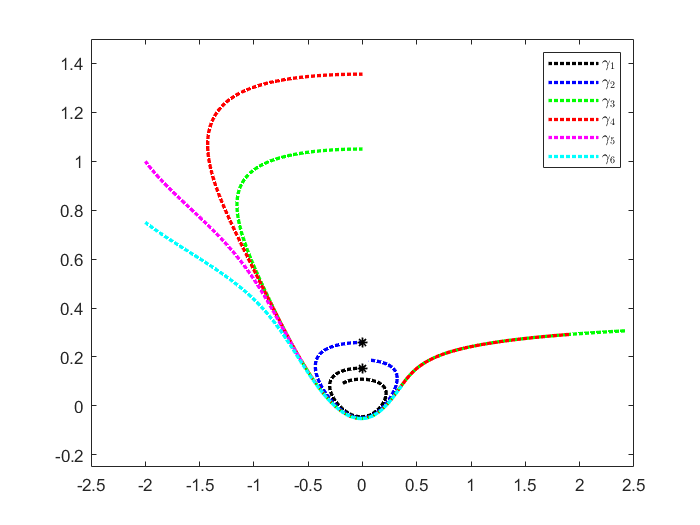}
 \put(54,0){\scriptsize $x$}
      \put(4,40){\scriptsize $y$}
 \end{overpic}
  \caption{Trajectories for the Kahan discretization~\eqref{Kahan for van der Pol} of the transformed van der Pol equation~\eqref{eq:van_der_Pol} with $h = 0.02$ and $\epsilon =0.1$, taking $\lambda = -(3 a_3/8) \epsilon$, $a_3 =1/3$: the orbits $\gamma_1$ and $\gamma_2$ are bounded with initial points $(x_{0},y_{0})$ (black dots) closely above the origin. The other orbits seem to lie beneath a separatrix that would have the role of a maximal canard.}
\label{fig:kahan_vanderPol}
\end{figure}

\begin{figure}[H]
\centering
\begin{subfigure}{.45\textwidth}
  \centering
  \begin{overpic}[width=1\linewidth]{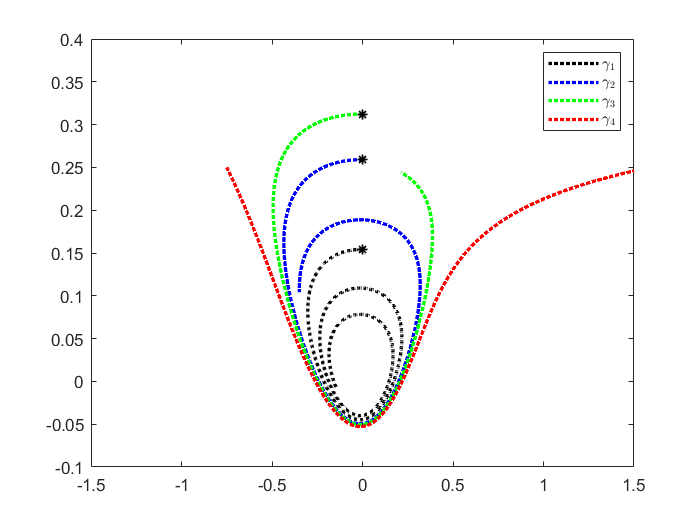}
    \put(54,0){\scriptsize $x$}
    \put(4,40){\scriptsize $y$}
        \end{overpic}
  \caption{$\lambda = -(3 a_3/8) \epsilon$}
\end{subfigure}%
\begin{subfigure}{.45\textwidth}
  \centering
  \begin{overpic}[width=1\linewidth]{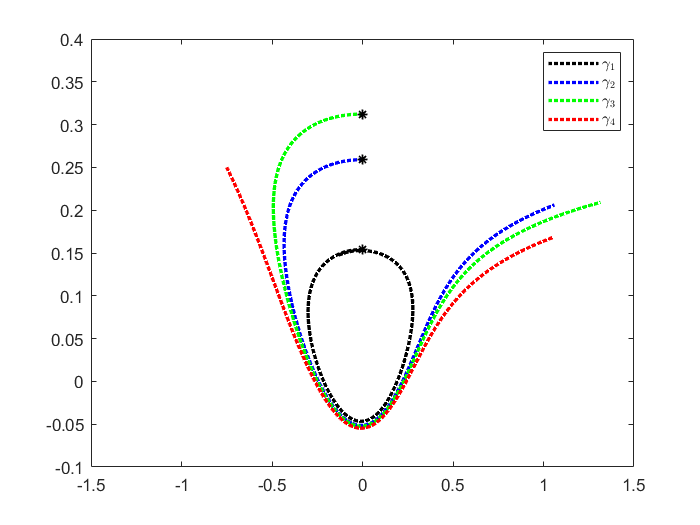}
      \put(54,0){\scriptsize $x$}
      \put(4,40){\scriptsize $y$}
        \end{overpic}
  \caption{$\lambda = -(3 a_3/8) \epsilon + 0.15 \epsilon^{3/2}$}
\end{subfigure}
\caption{Trajectories for the Kahan discretization~\eqref{Kahan for van der Pol} of the transformed van der Pol equation~\eqref{eq:van_der_Pol} with $h = 0.02$ and $\epsilon =0.1$, taking (a) $\lambda = -(3 a_3/8) \epsilon$, $a_3 =1/3$, such that spiralling towards an attractive equilibrium is indicated, and (b) $\lambda = -(3 a_3/8) \epsilon + 0.15 \epsilon^{3/2}$, $a_3 =1/3$, such that a periodic orbit occurs.}
\label{fig:vdP_Hopf}
\end{figure}
Note that the results on maximal canards for perturbations of the canonical form are local and do not make statements on the global stability. The preservation of canards for the van der Pol equation as depicted in Figure~\ref{fig:kahan_vanderPol}  is apparently also of predominantly local nature. Hence, we take a closer look in Figure~\ref{fig:vdP_Hopf}, zooming into a neighbourhood of the inward-spiralling orbits from Figure~\ref{fig:kahan_vanderPol}. 
Here, we observe that the Kahan discretization even seems to capture
the occurrence of a Hopf bifurcation in a neighbourhood of the maximal
canard, as we slightly vary the parameter $\lambda$. Furthermore, the scheme seems to
avoid crossing trajectories near the fold, which do occur as spurious
solutions for some forward numerical methods near maximal canards.
Indeed, there are also robust methods from boundary value problems (BVPs) \cite{DesrochesKrauskopfOsinga2,  Guckenheimeretal2000} and control theory \cite{DurhamMoehlis, JardonKuehn} to track canards for
the van der Pol equation. However, these approaches do not take direct advantage of
the polynomial structure, nor of the particular locally approximately
integrable or symmetry structures of the van der Pol equation. Hence, building on the presented insights for the Kahan method, we consider an analytical treatment of the discretized cubic canard problem an intriguing direction for future work.

\section{Conclusion} \label{sec:conclusion}

Our results show the importance of combining geometric invariants or integrable structures hidden in blow-up coordinates with suitable discretization schemes. Although we have just treated a very low-dimensional fast-slow fold case, one anticipates similar results also to be relevant for various other higher-dimensional singularities and bifurcation points, where blow-up is a standard tool. For example, it is well-known that in the Bogdanov-Takens unfolding one obtains small homoclinic orbits via a hidden integrable structure visible only after re-scaling. A thorough discretization analysis of higher-dimensional canards, similarly to the one at hand, would also deserve further investigation.

From a numerical perspective, forward integration schemes often provide an exploratory perspective to actually detect interesting dynamics or find a suitable invariant solution for fixed parameter values. In several cases these particular forward solutions are then used as starting conditions in numerical continuation techniques~\cite{Dhoogeetal,Doedel_AUTO2007} to study parametric dependence in a setting of BVPs. BVPs have also been successfully adapted to parametrically continue canard-type solutions~\cite{DesrochesKrauskopfOsinga3,DesrochesKrauskopfOsinga2,GuckenheimerKuehn2,KuehnCanLya}. In particular, BVPs for canards turn out to be well-posed with a small numerical error, yet to set up the problem purely by continuation one already needs a very good understanding of phase space for the initial canard orbits. Therefore, a direct numerical integration scheme can be very helpful to automatically yield suitable starting solutions close to a maximal canard.

The Kahan method has mainly turned out to be favorable, since explicit, for quadratic vector fields; hence, in our analysis we have focused on this situation. However, we have seen in the numerical investigations in Section~\ref{sec:numerics} that, by using its implicit form, also non-quadratic problems can be tackled, at least numerically. A further investigation into the dynamical and algebraic properties of the scheme, in particular for cubic nonlinearities, is a highly intriguing research question for the future, in general, and also in particular with respect to geometric multiscale problems as the one presented in this work.
\begin{appendices}
\section{Existence of maximal canards for ODEs} \label{Appendix}

In order to use specific geometric methods in singular perturbation theory, we consider $\epsilon$ and $\lambda$ as variables, writing equation~\eqref{fastequlambda} as
\begin{align} \label{fourvariables}
\begin{array}{r@{\;\,=\;\,}l}
x' & f(x,y, \lambda, \epsilon), \\
y' & \epsilon g(x,y, \lambda, \epsilon), \\
\epsilon' & 0, \\
\lambda' & 0.
\end{array}
\end{align}
Note that in equation~\eqref{normalform} the Jacobi matrix of the vector field in $(x,y, \lambda, \epsilon)$ has a quadruple zero eigenvalue at the origin. A well established way to gain (partial) hyperbolicity at such a singularity is the blow-up technique which replaces the singularity by a manifold on which the dynamics can be desingularized. 
An important technical assumption for this technique is \textit{quasi-homogeneity} of the vector field $f: \mathbb{R}^n \to \mathbb{R}^n$ of the ODE  (cf. \cite[Definition 7.3.2]{ku2015}), which means that there are $(a_1, \dots, a_n) \in \mathbb{N}^n$ and $k \in \mathbb{N}$ such that for every $r \in \mathbb{R}$ and each component $f_j:\mathbb{R}^n \to \mathbb{R}$ of $f$ we have
$$
f_j(r^{a_1} z_1, \dots, r^{a_n} z_n) = r^{k+a_j} f_j(z_1, \dots, z_n). 
$$
The proof of Theorem~\ref{canard_classic} in \cite{ks2011} uses the quasi-homogeneous \textit{blow-up transformation} $\Phi: B \to \mathbb R^4$,
$$ 
x = r \bar x, \quad y =  r^2 \bar y, \quad \epsilon =  r^2 \bar \epsilon, \quad \lambda =  r \bar \lambda, 
$$
where $(\bar x, \bar y,  \bar{\epsilon},\bar \lambda,  r) \in B = S^2 \times [- \kappa, \kappa] \times [0, \rho] $, where
$S^2 = \{(\bar x, \bar y, \bar \epsilon) \, : \, \bar x^2 + \bar y^2 + \bar \epsilon^2 = 1 \}$, with some $\kappa, \rho > 0$. 
We assume that $\rho$ and $\kappa$ sufficiently small, so that the dynamics on $\Phi(B)$ can be described by the normal form approximation.
Let $\overline{X}=\Phi^*(X)$ be the pull-back of the vector field $X$ to $B$. 
The dynamics of $\overline{X}$ on $B$ are analyzed in two charts $K_1$, $K_2$:
\begin{itemize}
\item the \emph{entering and exiting chart} $K_1$ projecting the neighborhood of $(0,1,0)$ on $S^2$ to the plane $\bar y = 1$:
\begin{equation}\label{K1} 
K_1 : \quad x = r_1 x_1, \quad y = r_1^2, \quad \epsilon = r_1^2 \epsilon_1, \quad \lambda = r_1 \lambda_1, 
\end{equation}
\item and the \emph{scaling chart} $K_2$ projecting the neighborhood of $(0,0,1)$ on $S^2$ to the plane $\bar \epsilon = 1$:
\begin{equation} \label{K2}
K_2 : \quad x = r_2 x_2, \quad y = r_2^2 y_2, \quad \epsilon = r_2^2, \quad \lambda = r_2 \lambda_2.
\end{equation}
\end{itemize}
The dynamics in the chart $K_2$ is of a primary interest. Here, the transformed equations admit a time rescaling allowing to divide out a factor $r_2$, which is possible due to the quasi-homogeneity of the leading part of the vector field $X$ (the new time being denoted by $t_2=r_2t$). Upon this operation, equations of motion take the form
\begin{align} \label{K2_hard}
\begin{array}{r@{\;\,=\;\,}l}
x_2' & - y_2 +  x_2^2 + r_2 G_1(x_2, y_2) + \mathcal{O}(r_2(\lambda_2 + r_2)), \\
y_2' & x_2 - \lambda_2 + r_2G_2(x_2, y_2) + \mathcal{O}(r_2(\lambda_2 + r_2)), \\
r_2' & 0, \\
\lambda_2' & 0,
\end{array}
\end{align}
where $G =(G_1, G_2)$ can be written explicitly as
\begin{equation} \label{Gperturbation}
G(x_2, y_2) = \begin{pmatrix}
G_1(x_2, y_2) \\ G_2(x_2,y_2)
\end{pmatrix} = \begin{pmatrix}
a_1 x_2 - a_2 x_2 y_2 + a_3 x_2^3 \\ a_4 x_2^2 + a_5 y_2
\end{pmatrix} .
\end{equation}
On the invariant set $\{r_2 = 0, \lambda_2=0\}$, we have
\begin{equation} \label{K2_easy}
\begin{pmatrix} x_2' \\ y_2' \end{pmatrix} = f(x_2,y_2)= \begin{pmatrix}  - y_2 +  x_2^2  \\ x_2 \end{pmatrix}.
\end{equation}
Let us list some crucially important qualitative features of system \eqref{K2_easy}.
\begin{itemize}
\item As pointed out in \cite[Lemma 3.3]{ks2011}, system~\eqref{K2_easy} possesses an integral of motion
\begin{equation} \label{firstintegral}
H(x_2, y_2) =  \textnormal{e}^{-2 y_2} \left( y_2 - x_2^2 + \frac{1}{2} \right).
\end{equation}
\item Moreover, one can put \eqref{K2_easy} as a generalized Hamiltonian system
\begin{equation} \label{K2 ham}
\begin{pmatrix} x_2'  \\ y_2' \end{pmatrix}=\frac{1}{2}\textnormal{e}^{2y_2}\begin{pmatrix} 0 & 1 \\ -1 & 0\end{pmatrix} \grad H(x_2,y_2).
\end{equation}
\item As a generalized Hamiltonian system, \eqref{K2_easy} preserves the measure $\textnormal{e}^{-2y_2}\rmd x_2\wedge \rmd y_2$. Since the density of an invariant measure is defined up to a multiplication by an integral of motion, the following is an alternative invariant measure:
\begin{equation} \label{K2 invariant_measure}
\mu = \frac{\rmd x \wedge \rmd y}{|y_2 - x_2^2 + \frac{1}{2}|} .
\end{equation}
\item System~\eqref{K2_easy} has an equilibrium of center type at $(0,0)$, surrounded by a family of periodic orbits coinciding with the level curves $\{H(x_2, y_2)=c\}$ for $0<c< \frac{1}{2}$. The level curves for $c<0$ correspond to unbounded solutions. These two regions of the phase plane are separated by the invariant curve $\{H(x_2, y_2)=0\}$, or
\begin{equation}\label{separatrix}
y_2=x_2^2-\frac{1}{2}.
\end{equation}
Thus, we have two alternative characterizations of the separatrix \eqref{separatrix}: on one hand, it is the level set $\{H(x_2, y_2)=0\}$, and on the other hand, it is the singular curve of the invariant measure \eqref{K2 invariant_measure}. 
\item Separatrix \eqref{separatrix} supports a special solution of \eqref{K2_easy}:
\begin{equation} \label{specialsolution}
\renewcommand{\arraystretch}{1.9}
\gamma_{0,2}(t_2) =\begin{pmatrix} x_{0,2}(t_2)\\ y_{0,2}(t_2)\end{pmatrix} = \begin{pmatrix} \dfrac{1}{2} t_2\\ \dfrac{1}{4} t_2^2 - \dfrac{1}{2} \end{pmatrix}, \quad t_2 \in \mathbb{R}.
\end{equation} 
\end{itemize}

Pulled back to the manifold $B$, the special solution $\bar \gamma_0$ connects the endpoint $p_a$ of the critical attracting manifold $S_a$ across the sphere $S^2$ to the endpoint $p_r$ of the critical repelling manifold $S_r$ (see e.g.~\cite[Figure 8.2]{ku2015}).
In other words, the center manifolds $\overline{M}_a$ and $\overline{M}_r$, corresponding to $p_a$ and $p_r$ respectively, and written in chart $K_2$ as $M_{a,2}$ and $M_{r,2}$, intersect along $\gamma_{0,2}$ for $r_2=\lambda_2=0$. 

The difference between $M_{a,2}$ and $M_{r,2}$ for $(r_2,\lambda_2)\neq (0,0)$ is measured by the difference $y_{a,2}(0) - y_{r,2}(0)$, where $\gamma_{a,2}(t) = (x_{a,2}(t), y_{a,2}(t))$ and $\gamma_{r,2}(t) = (x_{r,2}(t), y_{r,2}(t))$ are the trajectories in $M_{a,2}$ and $M_{r,2}$ respectively, for given $r_2,\lambda_2$ with the initial data $x_{a,2}(0) = x_{r,2}(0) = 0$.
This distance can be expressed as \cite[Proposition 3.5]{ks2011}
\begin{equation} \label{DCexpansion}
D(r_2, \lambda_2) = H(0,y_{a,2}(0)) - H(0,y_{r,2}(0)) = d_{r} r_2 + d_{\lambda} \lambda_2 + \mathcal{O}(2)\,,
\end{equation}
where
\begin{align}
 d_{r} &= \int_{-\infty}^{\infty} \big\langle \grad H(\gamma_{0,2}(t_2)), G(\gamma_{0,2}(t_2)) \big\rangle \, \rmd t_2, \label{dr2} \\
 d_{\lambda} &= \int_{-\infty}^{\infty} \big\langle \grad H(\gamma_{0,2}(t_2)),\begin{pmatrix}  0 \\ -1 \end{pmatrix} \big\rangle \, \rmd t_2\label{dlambda2}
\end{align} 
are the respective Melnikov integrals. Since $d_{\lambda} \neq 0$, one concludes by the implicit function theorem that for sufficiently small $r_2$ there exists $\lambda_2$ such that the manifolds $M_{a,2}$ and $M_{r,2}$ intersect. Transforming back into the original variables yields Theorem~\ref{canard_classic}.

It will be important for us that formulas \eqref{dr2}, \eqref{dlambda2} admit also a non-Hamiltonian expression given in \cite{Wechselberger2002}, where $\grad H(\gamma_{0,2}(t_2))$ is replaced by
\begin{equation}\label{K2 psi}
\psi(t_2)=2\textnormal{e}^{-2y_{0,2}(t_2)}\begin{pmatrix} -y_{0,2}'(t_2) \\ x_{0,2}'(t_2) \end{pmatrix}=\textnormal{e}^{-2y_{0,2}(t_2)}\begin{pmatrix} -t_2 \\ 1 \end{pmatrix}.
\end{equation}
The function $\psi(t_2)$ admits a more intrinsic interpretation as the only exponentially decaying solution of the adjoint system for the system \eqref{K2_easy}  linearized along the solution $\gamma_{0,2}(t_2)$,
\begin{equation} \label{contadjoint}
 \psi' = -\rmD f (\gamma_{0,2}(t_2))^{\top} \psi,
\end{equation}
while the expression $2y_{0,2}(t_2)=t_2^2/2$ in the exponent is interpreted as
\begin{equation}
\frac{t_2^2}{2}=\int_0^{t_2} \tr \rmD f(\gamma_{0,2}(\tau))\rmd \tau,
\end{equation}
the matrix of the system \eqref{K2_easy} linearized along the solution $\gamma_{0,2}(t_2)$ being given by 
\begin{equation}
\rmD f(\gamma_{0,2}(t_2)) = \begin{pmatrix} 2x_{0,2}(t_2) & -1 \\ 1 & 0 \end{pmatrix}
                                       = \begin{pmatrix} t_2 & -1 \\ 1 & 0 \end{pmatrix}.
\end{equation}
\end{appendices}

\bibliographystyle{abbrv}
\bibliography{mybibfile}

\end{document}